\theoremstyle{plain}
\newtheorem{theorem}{Theorem}[section]
\newtheorem{lemma}{Lemma}[section]
\newtheorem{definition}{Definition}[section]
\newtheorem{remark}[theorem]{Remark}
\newtheorem{corollary}{Corollary}
\numberwithin{equation}{section}
\numberwithin{lemma}{section}
\title{Limited Aperture Inverse Scattering Problems using Bayesian Approach and Extended Sampling Method
\thanks{The research of Li and Deng was supported in part by NNSF of China under grant 11771068.
}}
\author{Z. Li
\thanks{School of Mathematical Sciences, University of Electronic Science and Technology
of China, Chengdu, 611731, China. ({\tt lzx130682@163.com}).}
\and Z. Deng
\thanks{School of Mathematical Sciences, University of Electronic Science and Technology
of China, Chengdu, 611731, China. ({\tt ldengzhl@uestc.edu.cn}).}
\and J. Sun
\thanks{Department of Mathematical Sciences, Michigan Technological University, Houghton, MI 49931, U.S.A. ({\tt jiguangs@mtu.edu}).}
}
\date{}
\begin{document}
\maketitle
\begin{abstract}
Inverse scattering problems have many important applications. In this paper, given limited aperture data, we propose a Bayesian method for the inverse acoustic scattering to reconstruct the shape of an obstacle. The inverse problem is formulated as a statistical model using the Baye's formula. The well-posedness is proved in the sense of the Hellinger metric. The extended sampling method is modified to provide the initial guess of the target location, which is critical to the fast convergence of the MCMC algorithm. An extensive numerical study is presented to illustrate the performance of the proposed method.
\end{abstract}

\section{Introduction}
Inverse scattering problems have important applications such as radar, medical imaging, and non-destructive testing.
The goal is to detect and identify the unknown object using acoustic, electromagnetic or elastic waves, etc.
\cite{Baum1999, ColtonKress2013}. Depending on how much data can be obtained, 
the inverse scattering problems can be categorized as the full aperture problems and the limited aperture problems.

In the context of the inverse scattering theory,
many methods have been proposed for the full aperture inverse scattering problems
\cite{CakoniColton2014, ColtonKirsch1996IP, Kirsch98}.
These methods usually provide satisfactory reconstructions.
However, for a lot of practical applications such as underground mineral prospection and visually obscured target detection, 
it is not possible to measure the full aperture data \cite{Baum1999} and thus only limited aperture data are available.
There exist relatively less literatures on the limited aperture inverse scattering problems
\cite{AhnJeonMaPark, BaoLiu2003SIAMSC, IkehataNiemiSiltanen, OchsJr1987SIAMAM, ItoJinZou2012IP, 
Zinn1989, ChengPengYamamoto2005IP, AmmariEtal2012SIAMIS}.
In an early work  \cite{Lewis1969IEEEAP}, Lewis proposed a simple method to reconstruct the shape of the target based on an integral identity. 
Later works such as  \cite{Zinn1989, BaoLiu2003SIAMSC, OchsJr1987SIAMAM, AmmariEtal2012SIAMIS} used the framework of shape optimization. 
The range test, direct sampling methods, extended sampling method, etc., were also proposed to process the scattering data of one incident wave 
\cite{Potthast2006IP, ItoJinZou2012IP, LiuSun2018IP}.
An alternative approach is to obtain the full aperture from the limited aperture measurements. 
Analytic continuation, a severely ill-posed problem, was considered by some researchers \cite{Atkinson1978, ChengYamamoto1998IP, ChengPengYamamoto2005IP}.
In some works \cite{Juhl2008JASA, LiuSun2019JCP}, the full aperture data was recovered using some integral equations 
together with regularization schemes. Then the methods for full aperture data can be applied.
Other researchers take the approach by modifying the classical sampling methods using full-aperture data for the limited aperture problems.
The uniqueness of the inverse problems can be proved in some cases \cite{ColtonKress2013}.
The reconstruction is not as good as the full aperture case in general \cite{GuoMonkColton2016AA, AudibertHaddar2017SIAMIS}.

Recently, the Bayesian framework has received increasing attention for inverse problems \cite{Fitzpatrick1991IP, KaipioSomerdalo2005, Stuart2010AN, AmmarEtal2013}. 
The inverse problem is recasted in the form of statistical inferences.
Variables are modeled as being random and the known information is coded in the priors. 
Using the Bayes' formula, the solution to the inverse problem becomes the posterior probability distribution of the unknown quantities.
We refer the readers to \cite{KaipioSomerdalo2005, Stuart2010AN} on the Bayesian framework for inverse problems and 
\cite{BaussardEtal2001IP, BuiGhatts2014SIAMUQ, YangMaZheng2015, HarrisRome2017AA, LiuLiuSun2019IP} on its applications to some inverse scattering problems.

In this paper, we focus on the development of a Bayesian method for the limited aperture inverse scattering problem
to reconstruct the boundary of a sound soft obstacle. 
The inverse problem is reformulated as a statistical quest of information.
The well-posedness is proved and an  MCMC algorithm is proposed to explore the posterior probability distribution.
It is critical to know the location of the target for the convergence of the MCMC algorithm. 
Recently, a new method, called the extended sampling method (ESM), 
was developed to obtain the size and shape of the target using the scattering data of one incident wave \cite{LiuSun2018IP, LiuLiuSun2019SIAMIS}. 
We modified the ESM such that it can be used to process limited aperture scattering data such that the location of the target can be obtained effectively using
the same set of measurement data.

The rest of the paper is organized as follows. In Section 2, we introduce the
direct scattering problem for a sound soft obstacle and the limited aperture inverse scattering problem. An integral approach is introduced for the direct scattering problem. 
In Section 3, we propose a modified ESM to obtain the obstacle location.
Section 4 contains the Bayesian formulation for the inverse problem.
Gaussian priors are used for the boundary parametrization of the obstacle, whose covariance operator is the inverse of the Laplacian. 
We provide a stability analysis for the Bayesian posterior probability distribution of the unknown shape parameters with respect to the noises.
In Section 5, we develop an efficient MCMC algorithm to explore the posterior probability distribution.
In Section 6, numerical examples are presented to validate the effectiveness of the proposed method.
Finally, in Section 7, we draw some conclusions and discuss future works.

\section{Direct and Inverse Scattering Problems}
Let $\Omega \subset \mathbb R^2$ be a bounded, simply connected domain with $C^2$ boundary $\partial\Omega$.
Denote by $\nu$ the unit outward normal to $\partial \Omega$.
Define  $\mathbb S = \{ x \in \mathbb R^2, |x| =1\}$.
The incident plane wave with direction $d \in \mathbb S$ is given by
\[
u^{i}(x):={\rm e}^{i kx\cdot d}, \quad x \in \mathbb{R}^2, d \in \mathbb S,
\] 
where $k>0$ is the wavenumber.
The direct scattering problem is to find  the scattered field $u^s$, or the total field $u=u^i+u^s$, such that
\begin{subequations}\label{5a}
\begin{align}
\label{5}\triangle u+k^2 u=0,& \quad \text{in}~~\mathbb{R}^2\backslash\overline{\Omega},\\[1mm]
\label{6a1}u=0,& \quad\text{on}~~\partial\Omega,~~~\\[1mm]
\label{7}
\lim\limits_{r\rightarrow\infty}\sqrt{r}\left(\frac{\partial u^s}{\partial r}-iku^s \right)=0.&
\end{align}
\end{subequations}
Equation~\eqref{6a1} is the sound-soft boundary condition and \eqref{7} is the Sommerfeld radiation condition. 
It is well-known that \eqref{5a} has a unique solution and the scattered field $u^s$ has an asymptotic expansion \cite{ColtonKress2013}
\[
u^s(x, d)
=\frac{e^{i \frac{\pi}{4}}}{\sqrt{8k\pi}}\frac{e^{i kr}}{\sqrt{r}}\left\{u^{\infty}(\hat{x}, d)+\mathcal{O}\left(\frac{1}{r}\right)\right\}\quad\mbox{as }\,r:=|x|\rightarrow\infty
\]
uniformly in all directions $\hat{x}=x/|x|$. The function $u^{\infty}(\hat{x}, d)$ is called the far-field pattern.

The limited aperture inverse scattering problem considered in this paper can be stated as follows.

{\bf LAIScaP:}  Determine $\partial \Omega$ from the far field pattern $u^{\infty}(\hat{x}, d), (\hat{x}, d) \in \gamma^o \times \gamma^i$, where
$\gamma^o, \gamma^i \subsetneq \mathbb S$ (see Fig.~\ref{Figure1}).

For example, if $\gamma^i=\{d\}$ and $\gamma^o=\mathbb S$, we have the far field pattern due to one incident wave.
If $\gamma^i=\gamma^o=\{d\}$, we have the back-scattering data.

In contrast, the full aperture problem is such that $u^{\infty}(\hat{x}; d)$ is available for all $\hat{x}, d \in \mathbb S$, i.e., $\gamma^i = \gamma^o = \mathbb S$.
It is well-known that the sound soft obstacle $\Omega$ can be uniquely determined by the full aperture far field pattern $u^\infty(\hat{x},d)$ for all $\hat{x}, d\in \mathbb{S}$.
Due to analyticity, the full aperture data  $u^\infty(\hat{x},d)$ for $(\hat{x}, d) \in \mathbb{S} \times \mathbb S$ is uniquely determined by
 $u^\infty(\hat{x},d)$ for $(\hat{x}, d) \in \gamma^o \times \gamma^i$ if both $\gamma^o$ and $\gamma^i$ are connected and have positive meansures.

\begin{figure}
\begin{center}
{ \scalebox{0.36} {\includegraphics{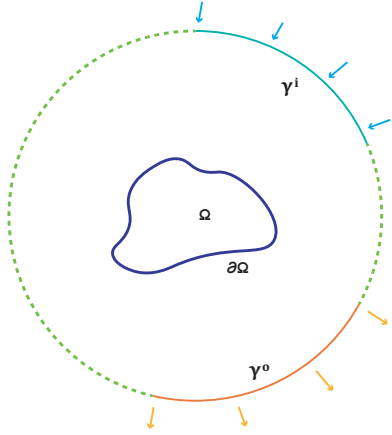}}}
\caption{The obstacle $\Omega$, the aperture of the incident waves $\gamma^i$, and the aperture of observation $\gamma^o$.}
 \label{Figure1}
\end{center}
\end{figure}

In the rest of this section, we present an integral equation formulation following \cite{ColtonKress2013} for the direct scattering problem \eqref{5a}.
The results will be used to analyze the Bayesian method and simulate the scattered fields in the MCMC algorithm.
Recall that the fundamental solution $\Phi(x,y)$ of the Helmholtz equation is given by
\begin{equation*}
\Phi(x,y)=\frac{i}{4}H^1_0(|x-y|),
\end{equation*}
where $H^1_0$ is the Hankel function of the first kind of order zero.

Define the single layer potential operator $S$ 
 \begin{equation}\label{18}
(S\varphi)(x)=2\int_{\partial\Omega}\Phi(x,y)\varphi(y)ds(y),\quad x\in\partial  \Omega,
\end{equation}
and the double layer potential operator $K$ 
\begin{equation}\label{19}
(K\varphi)(x)=2\int_{\partial\Omega}\frac{\partial\Phi(x,y)}{\partial\nu(y)}
\varphi(y)ds(y),\quad x\in\partial  \Omega.
\end{equation}
Then $S$ and $K$ are bounded from $C^{0,\alpha}(\partial\Omega)$ into $C^{1,\alpha}(\partial\Omega)$ (Theorem 3.4 of \cite{ColtonKress2013}).

Using the single and double layer potentials, one can write the scattered field as
\begin{equation}\label{15}
u^s(x;\Omega)=\int_{\partial \Omega}\left\{\frac{\partial\Phi(x,y)}{\partial \nu(y)}-i\xi \Phi(x,y)\right\}
\varphi(y)ds(y),\quad x\in \mathbb R^2 \setminus \overline{\Omega},
\end{equation}
where $\xi$ is a real coupling parameter and $\varphi(y)$ is the unknown density function.
Then the direct scattering problem is to find the density $\varphi$ such that
\begin{equation}\label{20}
(I+K-i\xi S)\varphi=-2u^i \quad \text{on } \partial \Omega.
\end{equation}
There exists a unique solution $\varphi$ satisfying \eqref{20} and depending continuously on $u^i$ (Theorem 3.11 of \cite{ColtonKress2013}).
Furthermore, the far field pattern can be written as (Page 80 of \cite{ColtonKress2013})
\begin{equation}\label{uinfty}
u_\infty(\hat{x}, d) = \frac{e^{-i\frac{\pi}{4}}}{\sqrt{8\pi k}} \int_{\partial \Omega} ( k \nu(y) \cdot \hat{x} + \xi ) e^{-ik \hat{x} \cdot y} \varphi(y) ds(y),
\end{equation}
where $\varphi$ is the solution of \eqref{20}.

\section{Extended Sampling Method for LAIScaP}
Given limited aperture far field data, we first consider the problem of finding the location of the obstacle $\Omega$.
Recently, a simple method, called the extended sampling method (ESM), was proposed 
using the the far field data due to one incident wave \cite{LiuSun2018IP, LiuLiuSun2019SIAMIS}.
The method can effectively reconstruct the location and size of the obstacle. 
In this section, we generalize the ESM for the limited aperture data to obtain the location of the obstacle,
which is of critical importance for the convergence of the MCMC algorithm. 

We first present the ESM for one incident wave briefly here and refer the readers to \cite{LiuSun2018IP} for details.
Assume that the far field pattern $u_\infty(\hat{x}, d_0)$ is available for one incident plane wave with direction $d_0$.
Let $B \subset \mathbb{R}^{2}$ be a disc centered at the origin with radius $R$ large enough. 
The far field pattern for $B$ corresponding to the incident plane wave with direction $d$ is given by (see, e.g., Chp. 3 of \cite{ColtonKress2013}):
\begin{flalign}\label{uinf}
     u^B_{\infty}(\hat{x}, d)=-e^{-i \frac{\pi}{4}}\sqrt{\frac{2}{\pi k}}\bigg[\frac{J_0(kR)}{H_0^{1}(kR)}+2\sum_{n=1}^{\infty}\frac{J_n(kR)}{H_n^{1}(kR)}\cos(n\theta)\bigg], 
     \quad  \hat{x}\in \mathbb{S},
  \end{flalign}
where $J_n$ is the Bessel function, $H_{n}^{1}$ is the Hankel function of the first kind of order $n$, $\theta =\angle (\hat{x},d)$, the angle
between $\hat{x}$ and $d$. Define
\[
B_z:=\{x+z| x\in B, z \in \mathbb{R}^2\}
\] 
and let $u_{\infty}^{B_z}(\hat{x}, d)$ be the far field pattern of $B_z$. Then the following translation property holds
\begin{equation}\label{shift}
u^{B_z}_{\infty}(\hat{x}, d)=e^{i kz\cdot (d-\hat{x})}u^B_{\infty}(\hat{x},d),\ \ \ \ \hat{x}\in\mathbb{S}.
\end{equation}

Let $T$ be a domain with $\Omega$ inside.  For $z \in T$, define a far field operator $\mathcal{F}_z: L^2(\mathbb{S}) \to L^2(\mathbb{S})$ such that
\begin{equation}\label{FO}
\mathcal{F}_z g(\hat{x}) = \int_{\mathbb{S}}u^{B_z}_{\infty}(\hat{x},y)g(y)d s(y), \quad \hat{x} \in \mathbb{S}.
\end{equation}
Using $\mathcal{F}_z$, we set up a far field equation
\begin{equation}\label{fe}
\left(\mathcal{F}_z g\right)(\hat{x})=u_\infty(\hat{x}, d_0),\ \ \ \ \hat{x}\in \mathbb{S}.
\end{equation}
This integral equation is the main ingredient of the ESM. 
\begin{theorem}\label{theorem1}(Theorem 3.1 of  \cite{LiuSun2018IP})
Let $B_z$ be a sound-soft disc centered at $z$ with radius $R$. Let $\Omega$ be an inhomogeneous medium or an obstacle with Dirichlet, Neumann, or the impedance boundary condition. Assume that $k$ is not a Dirichlet eigenvalue for $B_z$. Then the following results hold for the far field equation (\ref{fe}):
\begin{itemize}
\item[1.] If $D\subset B_z$, for a given $\varepsilon>0$, there exists a function $g_z^\varepsilon\in L^2(\mathbb{S})$ such that
\begin{equation}\label{fe2}
\bigg\|\int_{\mathbb{S}}u^{B_z}_{\infty}(\hat{x},d)g_z^\varepsilon(d)d s(d)-U_\infty(\hat{x})\bigg\|_{L^2(\mathbb{S})}<\varepsilon
\end{equation}
and the Herglotz wave function $v_{g_z^\varepsilon}(x):=\int_{\mathbb{S}}e^{ikx\cdot d}g_z^\varepsilon (d)ds(d), x\in B_z$ converges to the solution $w\in H^1(B_z)$ of the Helmholtz equation with $w=-U^s$ on $\partial B_z$ as $\varepsilon\rightarrow 0$.

\item[2.] If $D\cap B_z=\emptyset$, every $g_z^\varepsilon\in L^2(\mathbb{S})$ that satisfies (\ref{fe2}) for a given $\varepsilon>0$ is such that
\begin{equation}
\lim_{\varepsilon\rightarrow 0}\|v_{g_z^\varepsilon}\|_{H^1(B_z)}=\infty.
\end{equation}
\end{itemize}
\end{theorem}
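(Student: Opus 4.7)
The plan is to follow the standard linear sampling method template, adapted to the \emph{extended} setting where the sampling object is the disc $B_z$ rather than a point source. The two essential analytic ingredients are the density of Herglotz wave functions in the space of $H^1(B_z)$ solutions of the Helmholtz equation, which is guaranteed precisely because $k$ is not a Dirichlet eigenvalue of $B_z$, together with Rellich's lemma and uniqueness of the exterior Dirichlet problem for $B_z$.

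For Part 1, let $U^s$ denote the scattered field for $\Omega$ with incident direction $d_0$. Since $D\subset B_z$, $U^s$ is smooth on a neighbourhood of $\overline{B_z}\setminus\overline{D}$ and in particular has a well-defined trace on $\partial B_z$. Because $k$ is not a Dirichlet eigenvalue of $B_z$, the interior Dirichlet problem $\Delta w+k^2 w=0$ in $B_z$ with $w=-U^s$ on $\partial B_z$ has a unique solution $w\in H^1(B_z)$. The density result yields $g_z^\varepsilon\in L^2(\mathbb{S})$ with $\|v_{g_z^\varepsilon}-w\|_{H^1(B_z)}$ arbitrarily small. The field $u^s_{B_z}[w]$ scattered by $B_z$ with incident wave $w$ agrees with $U^s$ on $\mathbb{R}^2\setminus\overline{B_z}$: both are radiating Helmholtz solutions there with identical Dirichlet trace $U^s|_{\partial B_z}$, and uniqueness of the exterior Dirichlet problem clinches the identification. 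Hence the far field of $u^s_{B_z}[w]$ equals $U_\infty$, and continuous dependence of the exterior Dirichlet problem on its boundary data turns the $H^1$ approximation into the $L^2(\mathbb{S})$ approximation \eqref{fe2}.

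For Part 2, I would argue by contradiction: assume some sequence $\varepsilon_n\to 0$ with associated densities $g_n:=g_z^{\varepsilon_n}$ satisfies \eqref{fe2} while $\|v_{g_n}\|_{H^1(B_z)}$ remains bounded. Extract a weakly convergent subsequence $v_{g_n}\rightharpoonup v$ in $H^1(B_z)$, with $v$ a Helmholtz solution in $B_z$. Compactness of the trace $H^1(B_z)\to L^2(\partial B_z)$ converts this into strong convergence of the boundary data, so the scattered fields $u^s_{B_z}[v_{g_n}]$ converge to $\tilde U^s:=u^s_{B_z}[v]$ outside $B_z$ and the far fields $\mathcal{F}_z g_n$ converge to the far field of $\tilde U^s$. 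Combined with \eqref{fe2} this forces the far field of $\tilde U^s$ to equal $U_\infty$, and Rellich's lemma then gives $\tilde U^s=U^s$ on $\mathbb{R}^2\setminus\overline{D\cup B_z}$. Since $D\cap B_z=\emptyset$, $\tilde U^s$ is real-analytic in a neighbourhood of $\overline{D}$, hence supplies a smooth Helmholtz extension of $U^s$ across $\partial D$. Combined with the boundary condition on $\partial D$ and unique continuation, this forces the plane wave $u^i(\cdot;d_0)$ itself to be a radiating Helmholtz solution on $\mathbb{R}^2$, which is impossible.

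The principal technical obstacle is the passage in Part 2 from weak $H^1(B_z)$ convergence of the Herglotz waves to convergence of the corresponding far fields; this rests on compactness of the trace on the disc together with continuity of the boundary-to-far-field map for the exterior Dirichlet problem on $B_z$, both standard for the disc geometry. A secondary subtlety is the final contradiction, which must be tailored to the admissible type of scatterer (Dirichlet, Neumann, or impedance boundary condition, or an inhomogeneous medium), but in every case it reduces to the impossibility of a non-trivial radiating Helmholtz solution vanishing on an open set.
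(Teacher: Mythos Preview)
The paper does not give its own proof of this theorem; it is quoted verbatim as Theorem~3.1 of \cite{LiuSun2018IP} and used as a black box to justify the extended sampling method. Your sketch is the standard linear sampling argument transplanted to the disc $B_z$ (density of Herglotz waves among interior Helmholtz solutions when $k$ is not a Dirichlet eigenvalue, continuous dependence of the exterior Dirichlet problem on its trace, and Rellich's lemma plus unique continuation for the contradiction), and this is precisely the route taken in the original reference, so your proposal is correct and aligned with the source proof.

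One minor caution: in Part~2 your final contradiction (``forces the plane wave $u^i$ itself to be radiating'') is phrased a bit loosely. What the extension $\tilde U^s$ actually gives is that the total field $u^i+U^s$ extends to a Helmholtz solution across $\partial D$ satisfying the relevant boundary/transmission condition there; the contradiction then comes either from an eigenvalue argument on $D$ (obstacle cases) or from the resulting entire radiating solution being identically zero (medium case), which in turn would force $u^i\equiv 0$. You correctly flag that this step must be handled case by case, but when you write it out in full be sure to separate the Dirichlet/Neumann/impedance obstacle arguments from the inhomogeneous-medium argument, since they invoke slightly different uniqueness statements.
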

Consequently, $\Omega$ can be reconstructed using the regularized solutions of \eqref{fe} for all the sampling points $z$'s in the domain $T$ of interrogation. 
The advantage of using $\mathcal{F}_z$ is that it can be computed ahead of time easily. In contrast, the classical far field operator uses full aperture far field data
and does not work for a single incident plane wave. While the location of $\Omega$ can be effectively determined, one can only obtain the location and rough
size of $\Omega$. Fortunately, this is enough for our purpose in this paper.

In the following, we generalize the above method for {\bf LAIScaP} to obtain the location of the obstacle.
We first consider the case of $u_\infty(\hat{x}, d)$ for a single incident plane wave with direction $d$ and $\hat{x} \in \gamma^o$, a non-trivial proper subset of $\mathbb{S}$. 
In fact, one can directly employ the ESM by solving the far field equations with the limited observation data 
\begin{equation}\label{feS0}
\mathcal{F}_z g(\hat{x}, d)=u^\infty(\hat{x}, d),\ \ \ \ \hat{x}\in \gamma^o.
\end{equation}
Note that due to analyticity of the far field pattern, Theorem~\ref{theorem1} holds when $\gamma^o$ contains a non-trivial connected subset of $\mathbb S$.
The indicator for the sampling point $z$ can be defined as
\begin{equation}\label{indicator}
I_z(d) = \|g_z^\epsilon(d)\|_{L^2}, \quad z \in T,
\end{equation}
where $g^\epsilon_z$ is the regularized solution of \eqref{feS0}. One can find the location of $\Omega$ by plotting $I_z$ for all $z \in T$.

For the general case of $u_\infty(\hat{x}, d), (\hat{x}, d)\in \gamma^o \times \gamma^i$, the indicator can be defined as
\begin{equation}\label{indicatordi}
I_z =\int_{\gamma^i} I_z(d) \text{d}s(d), \quad z \in T.
\end{equation}
In practice, the far field data are available for discrete sets of incident and observation directions, e.g.,
\[
u_\infty(\hat{x}_i, d_j), \quad \hat{x}_i \in \{\hat{x}_1, \ldots, \hat{x}_I\} \subset \mathbb S, \quad d_j \in \{d_1, \ldots, d_J\} \subset \mathbb S.
\]
For each $j$, as \eqref{feS0}, set up the equations
\begin{equation}\label{gzj}
\mathcal{F}_z g(\hat{x}_i, d_j)=u^\infty(\hat{x}_i, d_j), \quad i=1, \ldots, I,
\end{equation}
which is an ill-posed linear system. Let ${\bf g}_z^j$ be the regularized solution of \eqref{gzj}. Then the discrete indicator $I_z$ for multiple incident directions
is simply
\begin{equation}\label{Iz}
I_z = \sum_{j=1}^J |{\bf g}_z^j|.
\end{equation}
The ESM to obtain the location of the obstacle using limited aperture data is as follows.

\vskip 0.2cm
{\bf ESM for LAIScaP}
\begin{itemize}
\item[ ] input - $u_\infty(\hat{x}_i, d_j), \quad i=1, \ldots, I, j=1, \ldots, J$.
\item[1.] Generate a set of sampling points for a domain $T$ which contains $\Omega$.
\item[2.] Compute $u^{B_z}_{\infty}(\hat{x},d)$ for all $\hat{x}\in \mathbb{S}$ and $d \in \mathbb{S}$ for each $z \in T$.
\item[3.] For each observation direction $d_j$, set up a linear system according to \eqref{gzj} and compute an approximate solution ${\bf g}_z^j$.
\item[4.] Sum $|{\bf g}_z^j|$ over $j$ to obtain $I_z$ as \eqref{Iz}.
\item[5.] Find the minimum $I_{z_0}$ of $I_z$ and choose $z_0$ as the location of $\Omega$.
\end{itemize}


\section{Bayesian Approach}
The direct scattering problem can be written as
\begin{equation}\label{9}
u^\infty(\hat{x}, d)=\mathcal{F}(\Omega), \quad (\hat{x}, d) \in \gamma^o \times \gamma^i, 
\end{equation}
where $\mathcal{F}$ is the shape-to-measurement operator.
Assume that the boundary $\partial \Omega$ can be parametrized as
\begin{equation}\label{pOq}
\partial\Omega:=z_0+r(\theta)(\cos\theta,\sin\theta)
=z_0+\exp(q(\theta))(\cos\theta,\sin\theta), \quad \theta\in [0,2\pi),
\end{equation}
where $q(\theta)=\ln r(\theta), 0<r(\theta)<r_{\max}$, and $z_0$ is the location of $\Omega$.

Using the above parameterization and taking the noise in measurements into account, one can rewrite \eqref{9} as a statistical model
\begin{equation}\label{9a}
y=\mathcal{F}(q)+\eta,
\end{equation}
where  $q\in X$ and $y=u_\infty(\hat{x},d) \in Y$ for some suitable Banach spaces $X$ and $Y$. 
In particular, $y$ is the noisy observations of $u^\infty(\hat{x}, d)$ and $\eta(\hat{x}, d)$ is the noise.
In this paper, we assume that the observation noise is normal with mean zero and independent of $q$, i.e., $\eta(\hat{x}, d) \sim {\mathcal N}(0,\sigma^2)$.

In this paper, we choose $Y=\mathbb C$ and $X=C^{2,\alpha}[0,2\pi], \alpha \in (0, 1]$ \cite{BuiGhatts2014SIAMUQ}.
Define a norm on $\|\cdot\|_X$ as 
\begin{equation}\label{9a2}
\|q\|_X=\|q\|_\infty+
\|q'\|_\infty+
\|q''\|_\infty+
\sup_{\substack{\theta,\hat{\theta}\in[0,2\pi]\\ \theta\neq\hat{\theta} }}
 \frac{|q{''}(\theta)-q{''}(\hat{\theta})|}{|\theta-\hat{\theta}|^{\alpha}}.
\end{equation}
Assume that $q$ has the probability measure $\mu_0={\mathcal N}(m_0,c_0^2)$. 
We denote the posterior probability measure of $q$ by $\mu_y$. 
Let $\pi_0$ and $\pi_y$ denote the probability density functions of $\mu_0$ and $\mu_y$ respectively.
By Bayes' formula \cite{KaipioSomerdalo2005}, the posterior density function is
\begin{equation}\label{11}
\pi_y(q)=\frac{\pi_\eta\big(y-\mathcal{F}(q)\big)\pi_0(q)}
{\displaystyle\int_{X} \pi_\eta(y-\mathcal{F}(q))\pi_0(q)dq }.\\
\end{equation}
Thus
\begin{equation}\label{11a}
\begin{split}
\pi_y(q)&\propto\pi_\eta\big(y-\mathcal{F}(q)\big)\pi_0(q)\\
&\propto\exp\left(-\frac{1}{2}\left|\sigma^{-1}\big(y-\mathcal{F}(q)\big)\right|^2-
\frac{1}{2}\left\|c_0^{-1}(q-m_0)\right\|_X^2\right)\\
&\propto\exp\left(-\frac{1}{2}\left(|y-\mathcal{F}(q)|_\sigma^2+
\|q-m_0\|_{c_0}^2\right)\right),
\end{split}
\end{equation}
where $\propto$ means {\it is proportional to} 
and $\|\cdot\|_{c_0}= \|c_0^{-1}\cdot\|$ are covariance weighted norms.  
The inverse problem becomes the statistical inference of the posterior density $\pi_y(q)$.

 In the rest of this section, we study the well-posedness of the Bayesian method. We shall follow \cite{Stuart2010AN, BuiGhatts2014SIAMUQ}
 and extend the theory to the limited-aperture inverse scattering problems.
 Using the parametrization \eqref{pOq} for $\partial \Omega$ and results from \cite{ColtonKress2013}, we have the follow property for the 
 scattering operator $\mathcal{F}$.
 \begin{lemma}\label{LFq}
 For fixed $\hat{x}, d$ and every $\varepsilon>0$, there exists $M=M(\varepsilon)$ such that
 \begin{equation}\label{12}
|{\mathcal F}(q)|_\sigma\leqslant\exp\left(\varepsilon\|q\|^2_X+M\right)
\end{equation}
for all $q\in X$.
\end{lemma}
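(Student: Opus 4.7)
The plan is to estimate $|\mathcal{F}(q)|=|u^\infty(\hat{x},d)|$ directly from the representation formula \eqref{uinfty}, isolate each factor that depends on $q$, bound those factors by expressions that grow at most polynomially (and at worst, as the exponential of a linear function) in $\|q\|_X$, and then absorb everything into $\exp(\varepsilon\|q\|_X^2+M)$ using the elementary inequality
\[
at+b\ln(1+t)\leq \varepsilon t^{2}+M(\varepsilon,a,b),\quad t\geq 0,
\]
which holds for every $\varepsilon>0$. Since $|\cdot|_\sigma=\sigma^{-1}|\cdot|$, a bound on $|\mathcal{F}(q)|$ yields the claim up to an absorbable constant.

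From \eqref{uinfty} the triangle inequality gives
\[
|\mathcal{F}(q)|\leq \frac{k+|\xi|}{\sqrt{8\pi k}}\int_{\partial\Omega}|\varphi(y)|\,ds(y)\leq C(k,\xi)\,|\partial\Omega|\,\|\varphi\|_{L^\infty(\partial\Omega)}.
\]
Using the parametrization \eqref{pOq}, the tangent vector has length $e^{q(\theta)}\sqrt{1+q'(\theta)^2}$, hence
\[
|\partial\Omega|=\int_0^{2\pi}e^{q(\theta)}\sqrt{1+q'(\theta)^{2}}\,d\theta\leq 2\pi r_{\max}(1+\|q'\|_\infty)\leq C_1(1+\|q\|_X),
\]
so the arclength factor is linear in $\|q\|_X$.

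It remains to control $\|\varphi\|_{L^\infty(\partial\Omega)}$, where $\varphi$ solves \eqref{20} with $\|u^i\|_{L^\infty(\partial\Omega)}=1$. Pulling the integral equation back to the reference circle via $\theta$, the kernels of $S$ and $K$ become functions of the parametrization and its first two derivatives (the normal $\nu$ involves $q'$ and the curvature $q''$), together with the logarithmically singular Hankel kernel $\Phi$. Careful but standard estimation of these kernels on $C(\partial\Omega)$, or on $C^{0,\alpha}(\partial\Omega)$ as in Chp.~3 of \cite{ColtonKress2013}, yields polynomial bounds for $\|S\|$ and $\|K\|$ in $\|q\|_X$. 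Since $I+K-i\xi S$ is invertible (Theorem 3.11 of \cite{ColtonKress2013}) and depends continuously in operator norm on the boundary in the $C^{2,\alpha}$-topology, a Fredholm perturbation argument produces a bound of the form
\[
\|(I+K-i\xi S)^{-1}\|_{C\to C}\leq \Psi(\|q\|_X),
\]
where $\Psi(t)$ grows at most like $(1+t)^{N}\exp(Ct)$. Together with the previous estimate, this gives
\[
|\mathcal{F}(q)|\leq C_2\,(1+\|q\|_X)^{N+1}\exp(C_3\|q\|_X).
\]

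The main obstacle is the quantitative bound $\Psi$ on the resolvent, because Colton--Kress only asserts that the inverse exists; making its operator norm explicit in terms of $\|q\|_X$ requires unwinding the Fredholm alternative along a curve of boundaries. Fortunately, the statement to be proved is very forgiving: whatever finite $N$ and $C$ one obtains, taking the logarithm gives $\ln|\mathcal{F}(q)|\leq C_3\|q\|_X+(N+1)\ln(1+\|q\|_X)+\ln C_2$, and this is dominated by $\varepsilon\|q\|_X^{2}+M(\varepsilon)$ for any prescribed $\varepsilon>0$ and a suitable $M(\varepsilon)$. This yields \eqref{12} and completes the plan.
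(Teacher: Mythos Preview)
Your approach mirrors the paper's: both start from the far-field representation \eqref{uinfty}, pull out the Jacobian $e^{q}\sqrt{1+q'^{2}}$, and then convert a bound that is at worst exponential-linear in $\|q\|_X$ into the desired $\exp(\varepsilon\|q\|_X^2+M)$ via Young's inequality. The paper is terser: it writes down
\[
|u^\infty(\hat{x},d;q)|\leq C\,\bigl\|\sqrt{1+q'^2}\bigr\|_\infty\,\exp(\|q\|_\infty)
\]
directly, without isolating $|\partial\Omega|$ or using $r_{\max}$, and then bounds each factor by $\exp\bigl(\tfrac{1}{2\varepsilon}+\tfrac{\varepsilon}{2}\|q\|_X^2\bigr)$. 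In particular, the paper simply absorbs $\|\varphi\|_\infty$ into the constant $C$ without comment.

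You go further and try to control $\|\varphi\|_\infty$ explicitly, which is the honest thing to do since $\varphi$ solves an integral equation whose operator depends on $q$. However, the step
\[
\|(I+K-i\xi S)^{-1}\|_{C\to C}\leq \Psi(\|q\|_X),\qquad \Psi(t)\lesssim (1+t)^N e^{Ct},
\]
is not established by what you wrote. Fredholm theory (Theorem~3.11 in \cite{ColtonKress2013}) gives invertibility for each fixed boundary, and continuous dependence gives local uniformity, but neither yields a global quantitative bound of exponential type as $\|q\|_X\to\infty$; a Neumann-series ``perturbation'' argument only applies when the perturbation is small in norm, which fails for large $\|q\|_X$. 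Your closing remark that the statement is ``forgiving'' presupposes exactly the exponential growth you have not proved. So the gap you correctly identify as ``the main obstacle'' is not actually closed in your argument --- and, to be fair, the paper's own proof sidesteps rather than resolves the same issue.
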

\begin{proof}
Plugging the parametrization \eqref{pOq} into \eqref{uinfty}, the far field pattern can be written as
\[
u_\infty(\hat{x}, d; q) = \frac{e^{i\frac{\pi}{4}}}{\sqrt{8\pi k}} \int_{0}^{2\pi} ( k \nu(\theta) \cdot \hat{x} + \xi ) e^{-ik \hat{x} \cdot (e^q\cos \theta, e^q \sin \theta)^T} \varphi(\theta) e^{q(\theta)}\sqrt{1+q'(\theta)^2}d\theta.
\]
Hence we have that
\begin{equation}\label{21}
|u^\infty(\hat{x}, d; q)|\leqslant C\left\|\sqrt{1+q'^2}\right\|_{\infty}  \exp\left(\|q\|_{\infty}\right), \quad (\hat{x}, d) \in \gamma^o \times \gamma^i.
\end{equation}
When $\|q'\|_{\infty}\leqslant1$, it is clear that
 \[
 \left\|\sqrt{1+q'^2} \right\|_{\infty}\leqslant C.
 \]
When $\|q'\|_{\infty}\geqslant 1$, according to Young's inequality, we have
\begin{equation}\label{22}
\left\|\sqrt{1+q'^2}\right\|_{\infty} \leqslant \sqrt{2} \left\|q'\right\|_{\infty}
\leqslant \sqrt{2} \exp\left(\|q\|_X\right)
\leqslant \sqrt{2} \exp\left(\frac{1}{2\varepsilon}+\frac{\varepsilon\|q\|^2_X}{2}\right).
\end{equation}
On the other hand, the following estimation holds
\begin{equation}\label{23}
\exp(\|q\|_{\infty})
\leqslant  \exp\big(\|q\|_X\big)
\leqslant  \exp\left(\frac{1}{2\varepsilon}+\frac{\varepsilon\|q\|^2_X}{2}\right).
\end{equation}
Substitution of \eqref{22} and \eqref{23} into \eqref{21} yields 
\begin{equation*}\label{24}
|u^\infty(\hat{x}, d; q)|
\leqslant  \exp\left(\varepsilon\|q\|^2_X+M\right), \quad (\hat{x}, d) \in \gamma^o \times \gamma^i.
\end{equation*}
Since $\gamma^o$ and $\gamma^i$ are bounded, we obtain \eqref{12} and the proof is complete.
\end{proof}

\begin{lemma}\label{Fq1Fq2}
For fixed $\hat{x}, d$ and every $\tau>0$, there exists $M=M(\tau)>0$, such that, for all $q_1,\;q_2\in X$ with $\max \{ \|q_1\|_X, \|q_2\|_X\}<\tau,$
\begin{equation}\label{1a}
|{\mathcal F}(q_1)-\mathcal{F}(q_2)|_{\sigma}\leqslant M\|q_1-q_2\|_{X}.
\end{equation}
\end{lemma}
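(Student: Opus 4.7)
The plan is to pull all integrals back to the fixed parameter interval $[0,2\pi]$ via the parametrization $\theta \mapsto z_0 + e^{q(\theta)}(\cos\theta,\sin\theta)$, so that $\mathcal{F}(q)$ becomes an integral whose integrand depends on $q$ only through the composition with smooth functions. Explicitly, using the representation from the proof of \autoref{LFq},
\[
\mathcal{F}(q) = \frac{e^{i\pi/4}}{\sqrt{8\pi k}} \int_{0}^{2\pi} A(\theta;q,\hat{x})\,\varphi_q(\theta)\,e^{q(\theta)}\sqrt{1+q'(\theta)^2}\,d\theta,
\]
where $A(\theta;q,\hat x) = (k\nu_q(\theta)\cdot\hat{x}+\xi)\,e^{-ik\hat{x}\cdot(e^{q}\cos\theta,e^{q}\sin\theta)^T}$ and $\varphi_q$ solves the boundary integral equation \eqref{20} on $\partial\Omega_q$. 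The difference $\mathcal{F}(q_1)-\mathcal{F}(q_2)$ is then decomposed telescopically into four terms corresponding to varying, one at a time, the factors $A$, $\varphi_q$, $e^q$, and $\sqrt{1+q'^2}$ while freezing the others at either $q_1$ or $q_2$. On the ball $\{\|q\|_X<\tau\}$, all frozen factors are uniformly bounded in $L^\infty$ by a constant depending only on $\tau$, so each term reduces to bounding a single ``increment.''

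For the three elementary factors, the mean value theorem together with $|\hat{x}|\le 1$ and the uniform bound on $e^{q_j}, q_j'$ yields
\[
\bigl\|A(\cdot;q_1,\hat x)-A(\cdot;q_2,\hat x)\bigr\|_\infty,\;
\bigl\|e^{q_1}-e^{q_2}\bigr\|_\infty,\;
\bigl\|\sqrt{1+q_1'^2}-\sqrt{1+q_2'^2}\bigr\|_\infty
\;\le\; C(\tau)\,\|q_1-q_2\|_X,
\]
since each of these is a smooth function of $q,q',q''$ evaluated on a bounded set, and $\|\cdot\|_X$ controls all of them. (The normal $\nu_q(\theta)$ involves $q'$, which is likewise controlled.) This disposes of the ``easy'' three of the four telescoped differences.

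The main obstacle is the dependence of the density $\varphi_q$ on $q$, since $\varphi_q$ solves $(I+K_q-i\xi S_q)\varphi_q = -2u^i|_{\partial\Omega_q}$ with operators whose kernels depend on $q$ in a non-trivial but smooth way. I would proceed by subtracting the two equations and rearranging,
\[
(I+K_{q_1}-i\xi S_{q_1})(\varphi_{q_1}-\varphi_{q_2}) = -2\bigl(u^i|_{\partial\Omega_{q_1}}-u^i|_{\partial\Omega_{q_2}}\bigr)-\bigl[(K_{q_1}-K_{q_2})-i\xi(S_{q_1}-S_{q_2})\bigr]\varphi_{q_2}.
\]
Two ingredients are needed: (i) uniform invertibility of $I+K_q-i\xi S_q$ on $C^{0,\alpha}[0,2\pi]$ for $\|q\|_X<\tau$, which follows from the invertibility stated after \eqref{20} (Theorem~3.11 of \cite{ColtonKress2013}) combined with a continuity/compactness argument over the $X$-ball; and (ii) the Lipschitz estimates
\[
\|K_{q_1}-K_{q_2}\|,\;\|S_{q_1}-S_{q_2}\|\;\le\; C(\tau)\,\|q_1-q_2\|_X,
\]
obtained by substituting the parametrization into the kernels \eqref{18}--\eqref{19}, isolating the logarithmic singularity of $H_0^1$, and applying the mean value theorem to the smooth remainder; the incident-wave term is handled directly via differentiation of $e^{ikx\cdot d}$ along the perturbation. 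Combining these gives $\|\varphi_{q_1}-\varphi_{q_2}\|_\infty \le C(\tau)\|q_1-q_2\|_X$.

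Assembling the four telescoped contributions yields $|\mathcal{F}(q_1)-\mathcal{F}(q_2)| \le C(\tau)\|q_1-q_2\|_X$, and dividing by $\sigma$ converts this into the $|\cdot|_\sigma$ norm, producing the desired constant $M=M(\tau)$. The technically delicate step I expect to take the most care is step~(ii) above — controlling the difference of the singular boundary integral operators $K_{q_1}-K_{q_2}$ and $S_{q_1}-S_{q_2}$ uniformly in the $X$-ball — since it requires a careful splitting of the weakly singular kernels and use of the $C^{2,\alpha}$ regularity encoded in the norm $\|\cdot\|_X$.
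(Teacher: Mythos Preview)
Your proposal is correct and follows essentially the same route as the paper: reduce the far-field difference via the representation \eqref{uinfty} to the Lipschitz dependence of the density $\varphi_q$ on $q$, and obtain the latter from a perturbation argument on the boundary integral equation \eqref{20}. The paper's proof is far terser---it simply cites Theorem~5.16 of \cite{ColtonKress2013} for the density estimate and does not explicitly telescope over the other $q$-dependent factors $A$, $e^q$, $\sqrt{1+q'^2}$ as you do---so your write-up is in fact more complete than the paper's on this point.
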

\begin{proof}
Due to \eqref{uinfty}, we only need to show
\begin{equation*}
\|\varphi_1-\varphi_2\|_{\infty}\leqslant M\|q_1-q_2\|_{X},
\end{equation*}
which follows the proof of Theorem 5.16 of \cite{ColtonKress2013}.
\end{proof}


\begin{definition} 
The Hellinger distance between $\mu_1$ and $\mu_2$ with common reference measure $\nu$ is
\begin{equation}\label{34}
\rho_{_{H}}(\mu_{1},\mu_{2})=\left(\frac{1}{2}
\int\left(\sqrt{{d\mu_1}/{d\nu}}-\sqrt{{d\mu_2}/{d\nu}}\right)^2d\nu \right)^{1/2}.
\end{equation}
\end{definition}

Recall that if $\mu$ and $\nu$ are two measures on the same measure space, then $\mu$ is absolutely continuous with respect to
$\nu$ if $\nu(A)=0$ implies $\mu(A) = 0$ for $A \subset X$, written as $\mu \ll \nu$.
The Fernique Theorem (see, e.g., \cite{Stuart2010AN}) states the following.
If $\mu=\mathcal{N}(0,\sigma^2)$ is a Gaussian measure on Banach space $X$, so that $\mu(X)=1$, then there exists $\varepsilon>0$ such that
\begin{equation}\label{36a}
\int_X\exp(\varepsilon\|x\|^2_X)\mu(dx)<\infty.
\end{equation}

\begin{theorem}
Assume that $\mu_0$ is a Gaussian measure satisfying $\mu_0(X)=1$ and $\mu_y\ll \mu_0$. 
For $y_1$, $y_2$ with $\max\{|y_1|,~|y_2|\}<\tau$, there exists $M=M(\tau)>0$ such that 
\begin{equation}\label{37}
\rho_{_{H}}(\mu_{y_1},\mu_{y_2})\leqslant M|y_1-y_2|.
\end{equation}
\end{theorem}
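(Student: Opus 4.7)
The plan is to follow the template developed by Stuart and adapted in \cite{BuiGhatts2014SIAMUQ} for well-posedness of Bayesian inverse problems, now with the bounds provided by Lemmas \ref{LFq} and \ref{Fq1Fq2}. Since $\mu_y\ll\mu_0$, the Radon–Nikodym derivative can be written as
\begin{equation*}
\frac{d\mu_y}{d\mu_0}(q)=\frac{1}{Z(y)}\exp\bigl(-\Phi(q;y)\bigr),\qquad \Phi(q;y):=\tfrac{1}{2}|y-\mathcal{F}(q)|_\sigma^{2},
\end{equation*}
with normalizing constant $Z(y)=\int_X \exp(-\Phi(q;y))\,d\mu_0(q)$. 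I would use $\mu_0$ as the common reference measure $\nu$ in \eqref{34}, so the Hellinger distance reduces to an integral of $\bigl(Z(y_1)^{-1/2}\exp(-\tfrac12\Phi(q;y_1))-Z(y_2)^{-1/2}\exp(-\tfrac12\Phi(q;y_2))\bigr)^2$ against $\mu_0$.

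The first step is a pointwise Lipschitz bound for $\Phi$ in $y$. Expanding the square and using the elementary inequality $||a|^2-|b|^2|\le(|a|+|b|)|a-b|$ together with Lemma \ref{LFq} (with $\varepsilon$ small) gives, for $|y_1|,|y_2|<\tau$,
\begin{equation*}
|\Phi(q;y_1)-\Phi(q;y_2)|\le C(\tau)\bigl(1+\exp(\varepsilon\|q\|_X^2)\bigr)\,|y_1-y_2|,
\end{equation*}
and by the same trick $|\Phi(q;y_1)|\le C(\tau)(1+\exp(2\varepsilon\|q\|_X^2))$. The Fernique theorem \eqref{36a} then makes $\exp(\varepsilon\|q\|_X^2)$ $\mu_0$-integrable for $\varepsilon$ chosen small enough, which I will fix once and for all.

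The second step is to bound $Z(y)$ away from zero uniformly for $|y|<\tau$. Since $\Phi\ge0$, restricting the defining integral to the set $\{\|q\|_X\le 1\}$ (which has positive $\mu_0$-mass because $\mu_0(X)=1$ and $\mu_0$ is a nondegenerate Gaussian) and using the $\Phi$-upper bound above yields $Z(y)\ge Z_{\min}(\tau)>0$. An upper bound $Z(y)\le 1$ is trivial. Moreover, applying $|e^{-a}-e^{-b}|\le |a-b|$ for $a,b\ge0$ inside the integral defining $Z(y_1)-Z(y_2)$ and invoking the Lipschitz bound for $\Phi$ gives
\begin{equation*}
|Z(y_1)-Z(y_2)|\le C(\tau)\,|y_1-y_2|\int_X\bigl(1+\exp(\varepsilon\|q\|_X^2)\bigr)\,d\mu_0(q)\le C'(\tau)\,|y_1-y_2|.
\end{equation*}

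The final step is to assemble the estimate. Adding and subtracting $Z(y_1)^{-1/2}\exp(-\tfrac12\Phi(q;y_2))$ splits the Hellinger integrand into a "$\Phi$-part" and a "$Z$-part":
\begin{equation*}
2\rho_H(\mu_{y_1},\mu_{y_2})^2\le I_1+I_2,
\end{equation*}
with $I_1=Z(y_1)^{-1}\int_X\bigl(e^{-\Phi(q;y_1)/2}-e^{-\Phi(q;y_2)/2}\bigr)^{2}d\mu_0$ and $I_2=(Z(y_1)^{-1/2}-Z(y_2)^{-1/2})^2\int_X e^{-\Phi(q;y_2)}d\mu_0$. Using the numerical inequality $|e^{-a/2}-e^{-b/2}|\le\tfrac12|a-b|$ for $a,b\ge0$ gives $I_1\le C(\tau)|y_1-y_2|^2$ via the Fernique moment. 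For $I_2$, the mean value theorem applied to $z\mapsto z^{-1/2}$ on $[Z_{\min}(\tau),1]$ combined with the $Z$-Lipschitz bound gives $|Z(y_1)^{-1/2}-Z(y_2)^{-1/2}|\le C(\tau)|y_1-y_2|$, while $\int e^{-\Phi}d\mu_0=Z(y_2)\le 1$. Taking square roots and absorbing constants produces the desired bound \eqref{37}.

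The main obstacle I anticipate is the uniform lower bound on $Z(y)$: the argument works in full generality here only because Lemma \ref{LFq} is with respect to a fixed $(\hat{x},d)$, so $\Phi(q;y)$ is finite for every $q\in X$ and the restriction-to-a-ball trick gives a strictly positive infimum. One should also be careful to verify that the $\varepsilon$ used in Lemma \ref{LFq} can be chosen small enough to lie below the Fernique integrability threshold; doing this choice at the outset keeps all subsequent constants finite.
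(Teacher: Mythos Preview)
Your proposal is correct and follows essentially the same route as the paper's proof: both write the Radon--Nikodym derivative with normalizing constant $A(y)$ (your $Z(y)$), establish $0<Z_{\min}(\tau)\le Z(y)\le 1$ by restricting to $\{\|q\|_X\le 1\}$ and invoking Lemma~\ref{LFq}, prove the Lipschitz bound $|Z(y_1)-Z(y_2)|\le C(\tau)|y_1-y_2|$ via $|e^{-a}-e^{-b}|\le|a-b|$ and Fernique, and then split the Hellinger integrand into the same ``$\Phi$-part'' and ``$Z$-part'' you describe. Your explicit caution about choosing the $\varepsilon$ in Lemma~\ref{LFq} below the Fernique threshold is a point the paper leaves implicit but handles the same way.
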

\begin{proof}
For fixed $\hat{x}$ and $d$, $\mathcal{F}(q)=\mathcal{F}(\hat{x},d;q): X\to Y$ is a continuous map.
The Radon-Nikodym derivative is given by
\begin{equation*}\label{38}
\frac{d\mu_y}{d\mu_0}(q)=\frac{1}{A(y)}\exp\big(-\frac{1}{2}|y-\mathcal{F}(q)|^2_{\sigma}\big),
\end{equation*}
where
\begin{equation}\label{39}
A(y)={\int_{X}\exp\big(-\frac{1}{2}|y-\mathcal{F}(q)|^2_{\sigma}\big)d\mu_0(q)}.
\end{equation}
It is clearly that
\begin{equation}\label{39a}
A(y)\leqslant 1.
\end{equation}
From Lemma 4.1 and (\ref{39}), we have that
\begin{equation}\label{41}
\begin{split}
A(y)&\geqslant \int_{\{\|q\|_X\leqslant 1\}}\exp\big(-(|y|^2_{\sigma}+|\mathcal{F}(q)|^2_{\sigma})\big)d\mu_0(q)\\
&\geqslant\int_{\{\|q\|_X\leqslant 1\}}\exp(-M(\tau))d\mu_0(q)\\
&=\exp(-M(\tau))\mu_0\{\|q\|_X\leqslant 1\}\\
&>0,
\end{split}
\end{equation}
since the unit ball in $X$ has positive measure and $\mu_0$ is Gaussian.

Furthermore, using Lemma 4.1 and the Fernique Theorem, we have
\begin{eqnarray}
\nonumber |A(y_1)-A(y_2)|&\leqslant& \int_X \Big|\exp\big(-\frac{1}{2}\big|y_1-\mathcal{F}(q)\big|_{\sigma}^2\big)
-\exp\big(-\frac{1}{2}\big|y_2-\mathcal{F}(q)\big|_{\sigma}^2\big)\Big|d\mu_0(q)\\
\nonumber &\leqslant& \int_X \Big|\frac{1}{2}
\big|y_1-\mathcal{F}(q)\big|_{\sigma}^2-
\frac{1}{2}
\big|y_2-\mathcal{F}(q)\big|_{\sigma}^2\Big|d\mu_0(q)\\
\nonumber &\leqslant& \int_X \frac{1}{2} \Big||y_1|_{\sigma}^2-
|y_2|_{\sigma}^2\Big|+
\big|\mathcal{F}(q)\big|_{\sigma}|y_1-y_2|_{\sigma}d\mu_0(q)\\
\nonumber &\leqslant& \int_X \Big(\tau+
|\mathcal{F}(q)|_{\sigma}\Big)|y_1-y_2|_{\sigma}d\mu_0(q)\\
\nonumber &\leqslant& \int_X \exp(\varepsilon\|q\|_X^2+\ln [ \exp(M)+\tau\exp(-\varepsilon\|q\|_X^2)])\sigma^{-1}|y_1-y_2|d\mu_0(q)\\
\label{43} &\leqslant& M|y_1-y_2| .
\end{eqnarray}
From the definition of $\rho_H$, one has that
\begin{eqnarray}
\nonumber &&\rho^2_{_{H}}(\mu_{y_1},\mu_{y_2})\\
\nonumber&=&\frac{1}{2}
\int_X \left\{\left(\frac{\exp(-\frac{1}{2}|y_1-\mathcal{F}(q)|_{\sigma}^2)}{A(y_1)}\right)^{1/2}
-\left(\frac{\exp(-\frac{1}{2}|(y_2-\mathcal{F}(q)|_{\sigma}^2)}{A(y_2)}\right)^{1/2}\right\}^2
d\mu_0(q) \\
\nonumber&=&\frac{1}{2}
\int_X \left\{\left(\frac{\exp(-\frac{1}{2}|y_1-\mathcal{F}(q)|_{\sigma}^2)}{A(y_1)}\right)^{1/2}
-\left(\frac{\exp(-\frac{1}{2}|(y_2-\mathcal{F}(q)|_{\sigma}^2)}{A(y_1)}\right)^{1/2}\right.\\
\nonumber&&\qquad \left.+\left(\frac{\exp(-\frac{1}{2}|y_2-\mathcal{F}(q)|_{\sigma}^2)}{A(y_1)}\right)^{1/2}
-\left(\frac{\exp(-\frac{1}{2}|(y_2-\mathcal{F}(q)|_{\sigma}^2)}{A(y_2)}\right)^{1/2}\right\}^2
d\mu_0(q)\\
\nonumber&\leqslant& A(y_1)^{-1}\int_X  \left\{\exp\left(-\frac{1}{4}\big|y_1-\mathcal{F}(q)\big|_{\sigma}^2\right)
-\exp\left(-\frac{1}{4}\big|y_2-\mathcal{F}(q)\big|_{\sigma}^2\right)\right\}^2d\mu_0(q)\\
&&\qquad +\big|A(y_1)^{-1/2}-A(y_2)^{-1/2}\big|^2
\int_X \exp\left(-\frac{1}{2}\big|y_2-\mathcal{F}(q)\big|_{\sigma}^2\right)d\mu_0(q).\label{45}
\end{eqnarray}
Using Lemma 4.1 and the Fernique Theorem again, we have
\begin{eqnarray}
\nonumber&& \int_X \left\{\exp\left(-\frac{1}{4}\big|y_1-\mathcal{F}(q)\big|_{\sigma}^2\right)
-\exp\left(-\frac{1}{4}\big|y_2-\mathcal{F}(q)\big|_{\sigma}^2\right)\right\}^2d\mu_0(q)\\
\nonumber&\leqslant& \int_X \Big|\frac{1}{4}
\big|y_1-\mathcal{F}(q)\big|_{\sigma}^2-
\frac{1}{4}
\big|y_2-\mathcal{F}(q)\big|_{\sigma}^2\Big|^2d\mu_0(q)\\
\nonumber&\leqslant& \int_X \Big(\frac{1}{4} \Big||y_1|_{\sigma}^2-
|y_2|_{\sigma}^2\Big|+
\frac{1}{2}\big|\mathcal{F}(q)\big|_{\sigma}|y_1-y_2|_{\sigma}\Big)^2d\mu_0(q)\\
\nonumber&\leqslant& \int_X \frac{1}{4}\Big(\tau+|\mathcal{F}(q)|_{\sigma}\Big)^2|y_1-y_2|_{\sigma}^2d\mu_0(q)\\
\nonumber&\leqslant& \int_X \frac{1}{4}\exp(2\varepsilon\|q\|_X^2+2\ln [ \exp(M)+\tau\exp(-\varepsilon\|q\|_X^2)])\sigma^{-2}|y_1-y_2|^2d\mu_0(q)\\
&\leqslant& M|y_1-y_2|^2. \label{46}
\end{eqnarray}
According to the boundedness of $A(y_1)$ and $A(y_2)$, it holds that
\begin{eqnarray}
\nonumber&&\big|A(y_1)^{-1/2}-A(y_2)^{-1/2}\big|^2 \\
\nonumber&\leqslant& M\max\left(A(y_1)^{-3},A(y_2)^{-3}\right)
|A(y_1)-A(y_2)|^2\\
\label{51}&\leqslant& M|y_1-y_2|^2.
\end{eqnarray}
Combining \eqref{39a}-\eqref{51} we obtain\eqref{37}.
\end{proof}

For the limited aperture data $u^{\infty}(\hat{x}, d), (\hat{x}, d) \in \gamma^o \times \gamma^i$, the following result holds.
\begin{corollary}
\begin{equation}\label{cox}
\int_{\gamma^i} \int_{\gamma^o} \rho_{H}(\mu_{y_1(\hat{x},d)},\mu_{y_2(\hat{x}, d)}) \text{d}s(\hat{x}) \text{d}s(d)\leqslant M\|q_1-q_2\|_X.
\end{equation}
\end{corollary}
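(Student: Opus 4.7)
The plan is to combine the pointwise Hellinger stability bound from the preceding theorem with the Lipschitz continuity of the shape-to-measurement map $\mathcal{F}$ established in Lemma \ref{Fq1Fq2}, and then integrate over the two apertures. Concretely, I interpret $y_i(\hat x,d) = \mathcal{F}(\hat x,d;q_i)$ (the noiseless data generated by $q_i$, or the noisy data with a common noise realization) and assume $q_1,q_2$ lie in a bounded ball of $X$ so that some $\tau>0$ uniformly bounds $|y_i(\hat x,d)|$ on $\gamma^o \times \gamma^i$.

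First, I would apply the preceding theorem pointwise: for each $(\hat x,d) \in \gamma^o \times \gamma^i$ there is a constant $M_1 = M_1(\tau)$ with
\[
\rho_H\bigl(\mu_{y_1(\hat x,d)},\mu_{y_2(\hat x,d)}\bigr) \leqslant M_1\,|y_1(\hat x,d) - y_2(\hat x,d)|.
\]
Next, Lemma \ref{Fq1Fq2} gives, pointwise in $(\hat x,d)$, a constant $M_2 = M_2(\tau)$ with
\[
|y_1(\hat x,d) - y_2(\hat x,d)| = |\mathcal{F}(\hat x,d;q_1) - \mathcal{F}(\hat x,d;q_2)|_\sigma \leqslant M_2\,\|q_1 - q_2\|_X.
\]
Chaining these and integrating over the apertures (which are subsets of $\mathbb S$ and thus have finite one-dimensional measure $|\gamma^o|,|\gamma^i|$) yields \eqref{cox} with $M := |\gamma^o|\,|\gamma^i|\,M_1 M_2$.

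The only subtle point, and where I would be careful, is uniformity of the constants in $(\hat x,d)$. Inspecting the proof of Lemma~\ref{LFq}, the dependence of $\mathcal{F}(\hat x,d;q)$ on $(\hat x,d)$ enters through the factor $e^{-ik\hat x\cdot y}$, which has modulus one, and through $\nu(\theta)\cdot \hat x$, which is bounded by one; hence the bound \eqref{12} holds with a constant independent of $(\hat x,d) \in \gamma^o\times\gamma^i$. The same observation applies to Lemma \ref{Fq1Fq2}, whose constant $M_2$ depends only on $\tau$ via the integral equation estimate for the density $\varphi$. Consequently the Fernique-based estimates \eqref{39a}–\eqref{51} transfer uniformly in $(\hat x,d)$, which justifies pulling $M_1$ out of the integral and completes the argument.
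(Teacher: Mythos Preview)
Your proposal is correct and follows essentially the same route as the paper: the paper's proof simply invokes Lemma~\ref{Fq1Fq2} together with the boundedness of $\gamma^o$ and $\gamma^i$, implicitly chaining this with the preceding Hellinger stability theorem exactly as you do. Your explicit treatment of the uniformity of the constants in $(\hat x,d)$ is a useful addition that the paper omits.
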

\begin{proof}
Due to the fact that $\gamma^o$ and $\gamma^i$ are bounded sets, \eqref{cox} follows Lemma \ref{Fq1Fq2} immediately.
\end{proof}

\section{Numerical Algorithm}
Now we present a Metropolis-Hastings MCMC method to generate samples to explore the posterior probability density \eqref{11a}.
Firstly one needs to choose a prior distribution for $q$. 
According to Lemma 6.25 of \cite{Stuart2010AN}, one could assume a Gaussian prior which is consistent with the above theory (see also \cite{BuiGhatts2014SIAMUQ}):
\[
q_a''(\theta)\sim {\mathcal N}(0,\mathcal{A}^{-s}),\quad s > \frac{1}{2},
\]
where $\mathcal{A}:=-d^2/d\theta^2$ with the definition domain
\begin{equation*}
D(\mathcal{A}):=\left\{ v\in H^2[0,2\pi]:\;\;\int^{2\pi}_0 v(\theta)d\theta=0\right\}.
\end{equation*}
The eigenvalues of $\mathcal{A}$ are $n^2,\,n\in \mathbb{N}$
and the corresponding eigenfunctions are $\phi_{2n}=\cos(n\theta)/\sqrt{\pi}$ and $\phi_{2n-1}=\sin(n\theta)/\sqrt{\pi}$. 
Karhunen-Lo\`{e}ve expansion implies
\[
    q''(\theta) = \sum^\infty_{n=1}\left(\frac{a_n}{n^s}
\frac{\sin(n\theta)}{\sqrt{\pi}}+\frac{b_n}{n^s}
\frac{\cos(n\theta)}{\sqrt{\pi}}\right),
\]
where $a_n$ and $b_n$ are i.i.d. (independent and identically distributed) with $a_n\sim {\mathcal N}(0,1)$ and $b_n\sim {\mathcal N}(0,1)$. 
Integrating $q''(\theta)$, we obtain
\[
    q(\theta) =
\frac{a_0}{\sqrt{2\pi}}-\sum^N_{n=1}\left(\frac{a_n}{n^{s+2}}
\frac{\cos(n\theta)}{\sqrt{\pi}}+\frac{b_n}{n^{s+2}}
\frac{\sin(n\theta)}{\sqrt{\pi}}\right).
\]
 Choosing $s=1$, we have that
\begin{equation*}
    q_a''(\theta) =
\sum^\infty_{n=1}\left(\frac{a_n}{n}
\frac{\sin(n\theta)}{\sqrt{\pi}}+\frac{b_n}{n}
\frac{\cos(n\theta)}{\sqrt{\pi}}\right),
\end{equation*}
where $a_n$ and $b_n$ are i.i.d. (independent and identically distributed) with $a_n\sim {\mathcal N}(0,1)$ and $b_n\sim {\mathcal N}(0,1)$. 
Integrating $q''(\theta)$, we obtain
\[
    q_a(\theta) =
\frac{a_0}{\sqrt{2\pi}}-\sum^N_{n=1}\left(\frac{a_n}{n^3}
\frac{\cos(n\theta)}{\sqrt{\pi}}+\frac{b_n}{n^3}
\frac{\sin(n\theta)}{\sqrt{\pi}}\right).
\]

Note that the choice of the prior distribution is not unique \cite{KaipioSomerdalo2005}. As the second choice. 
\begin{equation*}
    q_b'(\theta) =
\sum^\infty_{n=1}\left(\frac{a_n}{n}
\frac{\sin(n\theta)}{\sqrt{\pi}}+\frac{b_n}{n}
\frac{\cos(n\theta)}{\sqrt{\pi}}\right).
\end{equation*}
Integrating and differentiating $q'(\theta)$, we obtain
\begin{equation}\label{qtheta} 
    q_b(\theta) =
\frac{a_0}{\sqrt{2\pi}}-\sum^N_{n=1}\left(\frac{a_n}{n^2}
\frac{\cos(n\theta)}{\sqrt{\pi}}-\frac{b_n}{n^2}
\frac{\sin(n\theta)}{\sqrt{\pi}}\right).
\end{equation}
for some constant $a_0$ and
\begin{equation}\label{qprimeprime}
    q_b''(\theta) =
\sum^\infty_{n=1}\left({a_n}
\frac{\cos(n\theta)}{\sqrt{\pi}}-{b_n}
\frac{\sin(n\theta)}{\sqrt{\pi}}\right),
\end{equation}
respectively.

For the third choice, we take
\[
q_c(\theta) = \frac{a_0}{\sqrt{2\pi}}+\sum^N_{n=1}\left(\frac{a_n}{n} \frac{\cos(n\theta)}{\sqrt{\pi}}+\frac{b_n}{n} \frac{\sin(n\theta)}{\sqrt{\pi}}\right).
\]
As a consequence, one has that
\[
q_c'(\theta) = \sum^N_{n=1}\left(-{a_n} \frac{\sin(n\theta)}{\sqrt{\pi}}+{b_n}\frac{\cos(n\theta)}{\sqrt{\pi}}\right)
\]
and
\[
q_c''(\theta) = \sum^N_{n=1}\left(-n{a_n} \frac{\cos(n\theta)}{\sqrt{\pi}}+n{b_n}\frac{\sin(n\theta)}{\sqrt{\pi}}\right).
\]
Note that $q_b$ and $q_c$ do not satisfy Lemma 6.25 of \cite{Stuart2010AN}.

Secondly, a Markov proposal kernel is needed for the MCMC method. 
This kernel proposes moves from the current state of the Markov chain to the next state. 
The new state is then accepted or rejected according to a criterion using the target distribution $\mu_y$ of \eqref{11a}.
In this paper, the proposal kernel is chosen as
\begin{equation}\label{kernelq}
q \Leftarrow f_1(q)=\sqrt{1-2\beta}q+\sqrt{2\beta}\xi.
\end{equation}
 where $\xi\sim \mathcal{N}(0,1)$ and $\beta$ is a scale parameter. 
 Again, note that there are various choices of the proposal kernel \cite{Stuart2010AN}, e.g.,
 $$ q \Leftarrow f_2(q)=q+\sqrt{2\beta}\xi.$$
 For each state, to evaluate \eqref{11a}, one needs to solve the direct scattering problem \eqref{5a},
 which is done by the Nystr\"{o}m method (see, e.g., Section 3 of \cite{ColtonKress2013}). 

\vskip 0.2cm
{\bf  LAIScaP using MCMC and ESM}
\begin{enumerate}
\item Use ESM to obtain the location $z_0$ of the obstacle $\Omega$.
\item Set $N$ in expansion \eqref{qtheta} and number of iterations $K$.
\item Choose 
	\[
		a_0=1, a_1=a_2=\cdots=a_N=b_N=0.
	\]
\item for $k=2:K$ do
	\begin{itemize}
 	\item Calculate $q_1(\theta), q_1^\prime(\theta), q_1^{\prime\prime}(\theta)$ from \eqref{qtheta}. 
	\item Solve the direct problem \eqref{5a} for $F(q_1)$ and calculate $\pi_z$ from (\ref{11a}).
	\item Draw $a_0,a_1,b_1,\cdots, a_N,b_N$ from $\mathcal{N}(0,1)$, respectively. 
	\item Calculate $\tilde{q_1}(\theta), \tilde{q_1}{'}(\theta), \tilde{q_1}{''}(\theta)$ and set
		\begin{eqnarray*}
			q_2&=&\sqrt{1-2\beta}q_1+\sqrt{2\beta}\tilde{q}_1,\\
			q_2^\prime&=&\sqrt{1-2\beta}q_1^\prime+\sqrt{2\beta}\tilde{q}_1^\prime,\\
			q_2^{\prime\prime}&=&\sqrt{1-2\beta}q_1^{\prime\prime}+\sqrt{2\beta}\tilde{q}_1^{\prime\prime}.
		\end{eqnarray*}
	\item Solve \eqref{5a} and calculate  $\pi_z^\prime.$
	\item Calculate the acceptance ratio
    		$$\alpha(q_1,q_2)=\min(1,{\pi_z^\prime}/{\pi_z}).$$
	\item Draw $\tilde{\alpha}\sim U(0,1),$\\
\indent  if $\tilde{\alpha}<\alpha(q_1,q_2)$\\
 \indent ~~~~ accept and set $q_1(\theta)=q_2(\theta), q_1^\prime(\theta)=q_2^\prime(\theta), q_1^{\prime\prime}(\theta)=q_2^{\prime\prime}(\theta)$ \\
\indent   else\\
\indent  ~~~~ reject\\
\indent  end
	\end{itemize}
\item Compute the CM for the last $1000$ $q$'s.
\end{enumerate}

\begin{remark}
It is possible to assume that the location $z_0$ is unknown and satisfies certain priors. However, the computational cost is prohibitive and
the reconstruction is unsatisfactory. It is important to known the correct location of the obstacle.
\end{remark}


\section{Numerical  Examples}
We present some numerical examples to show the effectiveness of the proposed method.
The incident field is given by the time harmonic acoustic plane wave
\[
u^i(x)=e^{ikx\cdot d}, \quad d \in \mathbb S.
\]
We fix the wave number $k=1$ and set $N = 10$ in \eqref{qtheta} and assume that the corresponding coefficients $a_n$'s and $b_n$'s obey the same distribution $\mathcal{N}(0,1)$.
For all examples,  we take the last $1000$ samples to compute the conditional mean for $a_n$'s and $b_n$'s.

We choose two obstacles: a kite given by
\[
  (x_1, x_2) = (\cos \theta+0.65\cos2\theta-0.65, 1.5  \sin \theta) \quad \theta \in (0, 2\pi]
\]
and a pear given by
\[
 (x_1, x_2)=\left(\dfrac{5+\sin3\theta}{6}\cos\theta,\dfrac{5+\sin3\theta}{6}\sin\theta \right), \quad \theta \in (0, 2\pi].
\]


The limited aperture far-field data $u^{\infty}(\hat{x}, d), (\hat{x}, d) \in \gamma^o \times \gamma^i$ is computed by a linear finite element method.
Let $\phi$ be the observation angle such that $\hat{x}:=(\cos\phi,\sin\phi)$. We consider the following observation/measurement apertures
\begin{eqnarray*}
&& \gamma_1^o=\{(\cos \phi, \sin \phi) | \; \phi \in [0,2\pi]\}, \\
&& \gamma_2^o=\{(\cos \phi, \sin \phi) | \;\phi \in [0,\pi]\}, \\
&& \gamma_3^o=\{(\cos \phi, \sin \phi) | \;\phi \in [0,\pi/2]\},\\
&& \gamma_4^o=\{ (\cos \phi, \sin \phi) | \;\phi \in [0,\pi/2]\cup[\pi,3\pi/2]\}, \\
&& \gamma_5^o=\{(\cos \phi, \sin \phi) | \;\phi \in [0,\pi/4]\cup [\pi,5\pi/4]\}.
\end{eqnarray*}
The incident apertures are
\begin{eqnarray*}
&& \gamma_1^i = \{ (1,0) \}, \\
&& \gamma_2^i = \{(\cos\theta,\sin \theta) |\; \theta=\{0,\pi/8,\pi/4,3\pi/8,\pi/2\}\}.
\end{eqnarray*}
%

%

\subsection{Different Boundary Parameterizations}
We first check the reconstructions of different boundary parameterizations $q_a$, $q_b$ and $q_c$ using the kite.
The limited aperture data is $u^{\infty}(\hat{x}, d), (\hat{x}, d) \in \gamma_1^o \times \gamma_1^i$, i.e., far field pattern of all direction due to one incident plane wave.
We set $\beta=0.0001$ and $\sigma = 10\%$. The number of iteration is set to be $K=10, 000$. The location $z_0=(0.2, 0)$ is obtained by the ESM.
In Figure~\ref{qs}, we plot the reconstructions of the boundary of three different parameterizations $q_a$, $q_b$, and $q_c$ (left column) and
Markov chains for $a_0$. The results show that $q_b$ performs better. In the following examples, we shall use $q_b$.
\begin{figure}[ht]
\begin{center}
\begin{tabular}{cc}
\resizebox{0.52\textwidth}{!}{\includegraphics{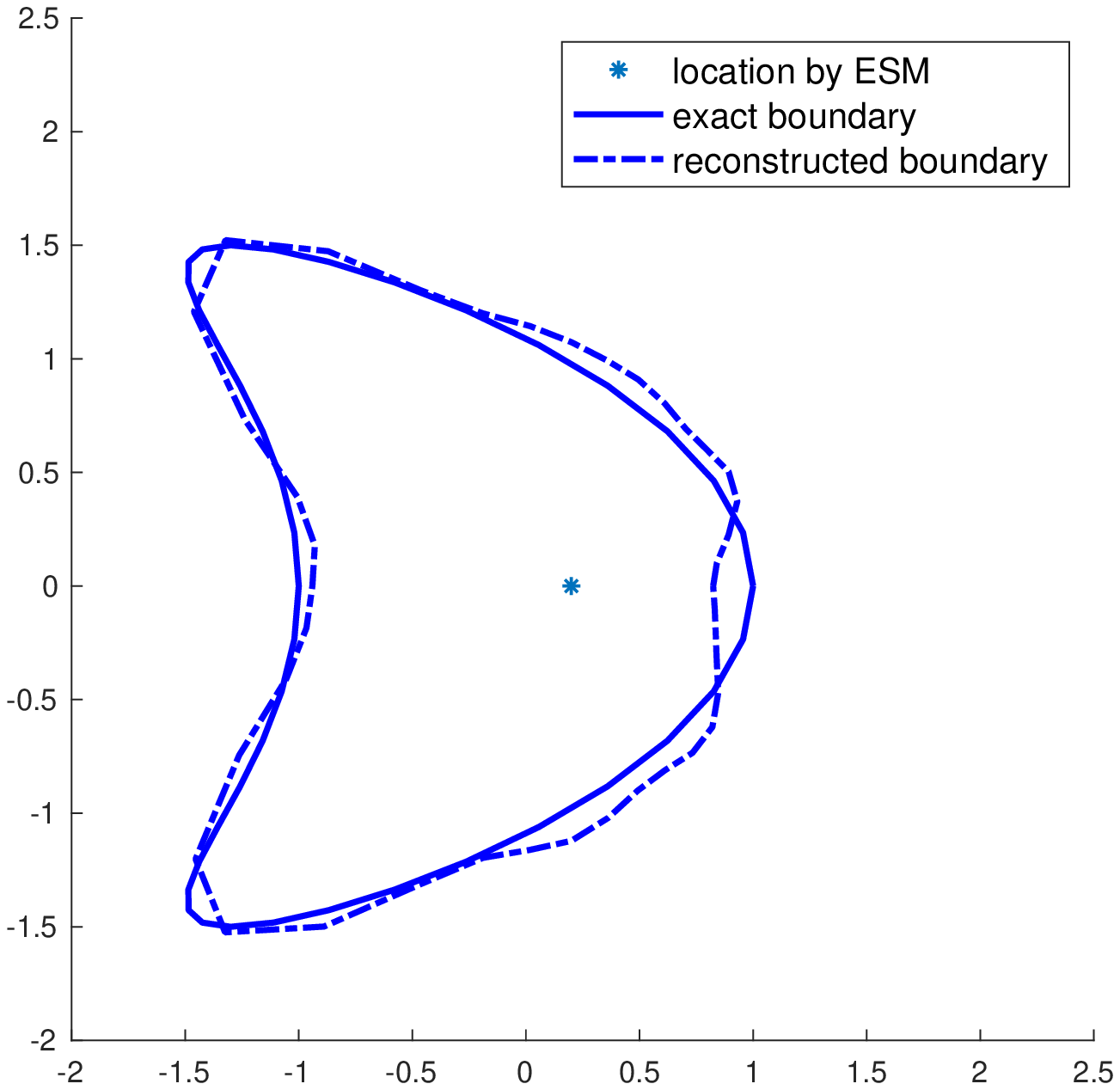}}&
\resizebox{0.52\textwidth}{!}{\includegraphics{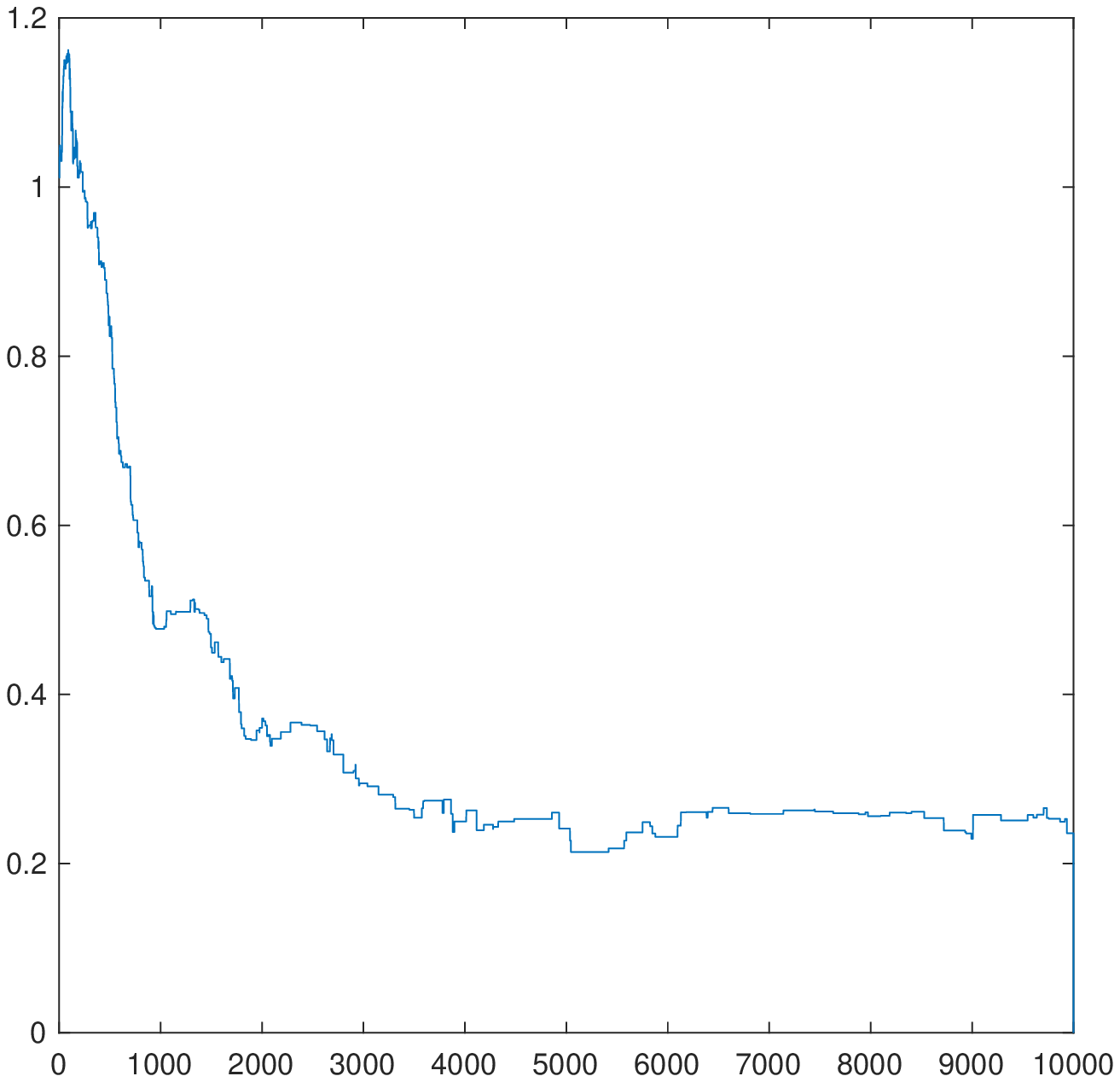}}\\
\resizebox{0.52\textwidth}{!}{\includegraphics{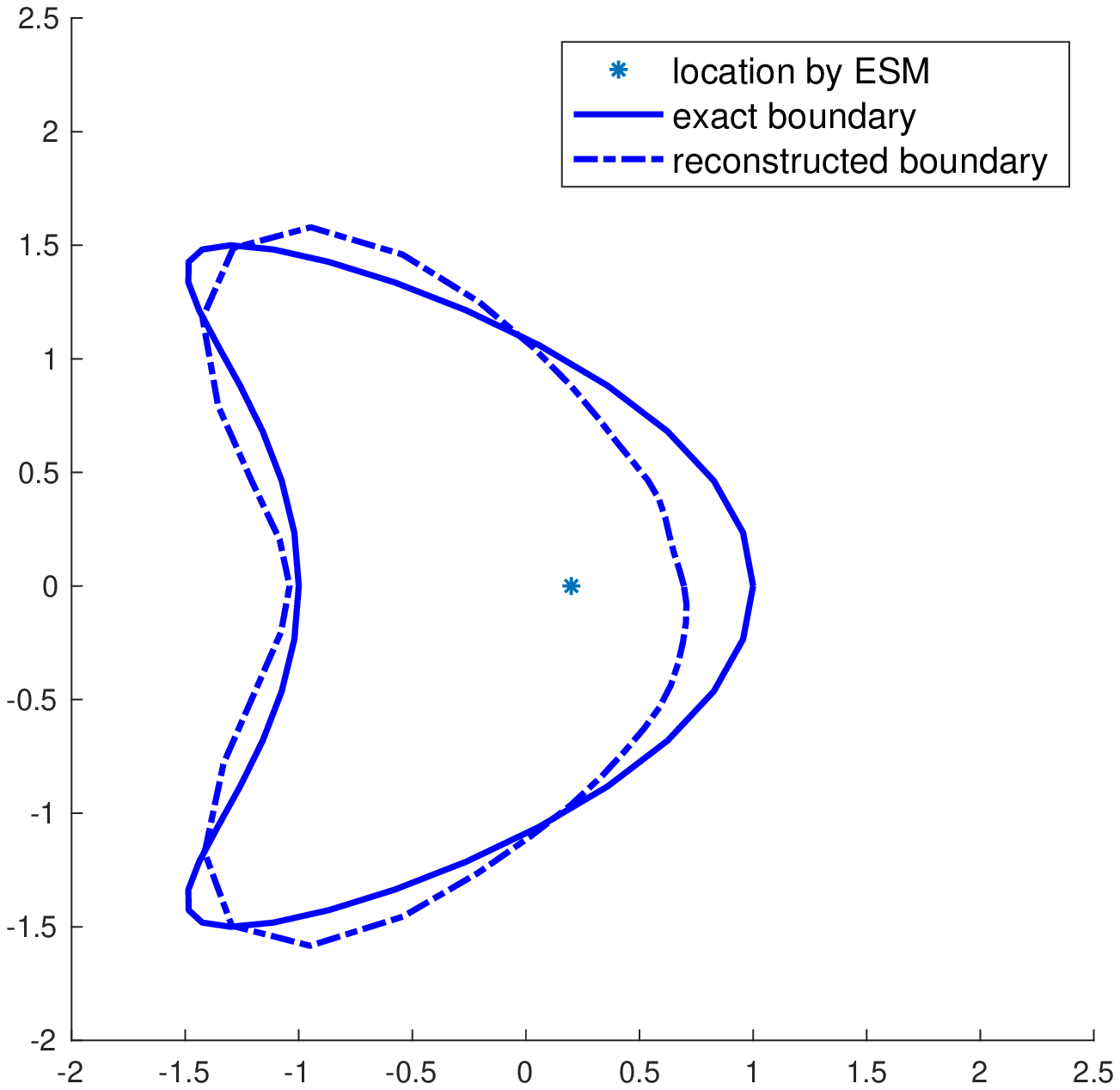}}&
\resizebox{0.52\textwidth}{!}{\includegraphics{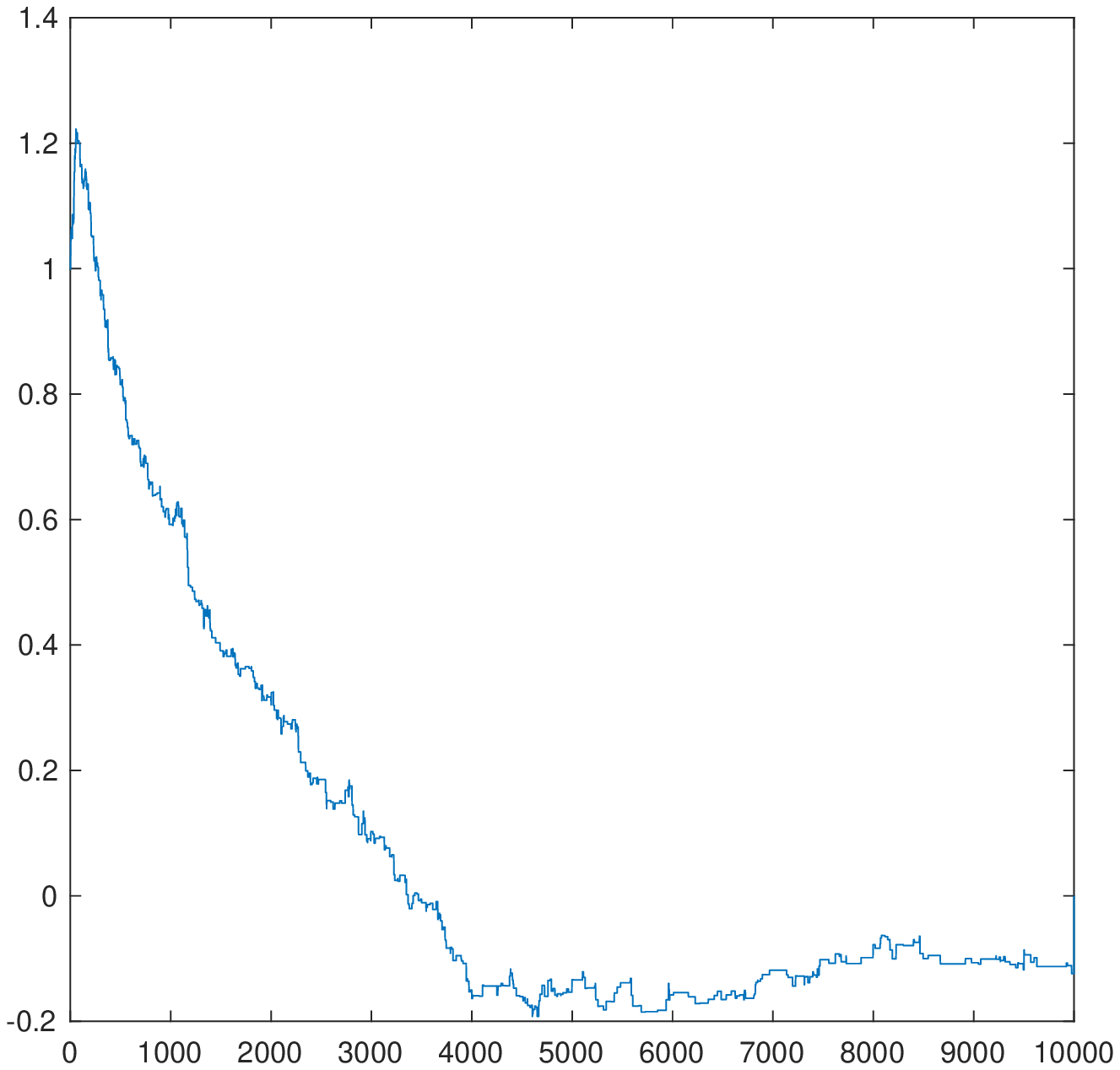}}\\
\resizebox{0.52\textwidth}{!}{\includegraphics{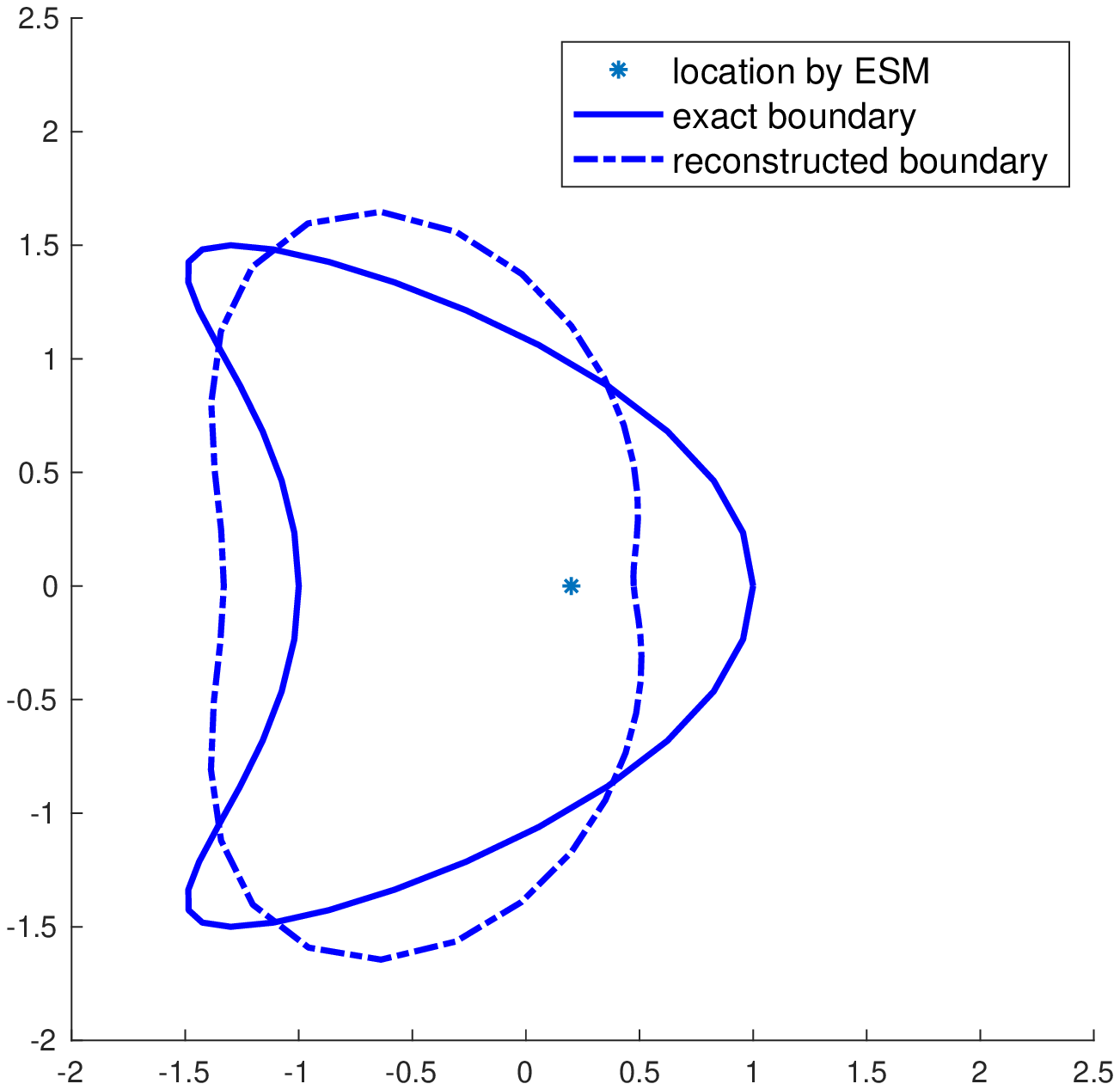}}&
\resizebox{0.52\textwidth}{!}{\includegraphics{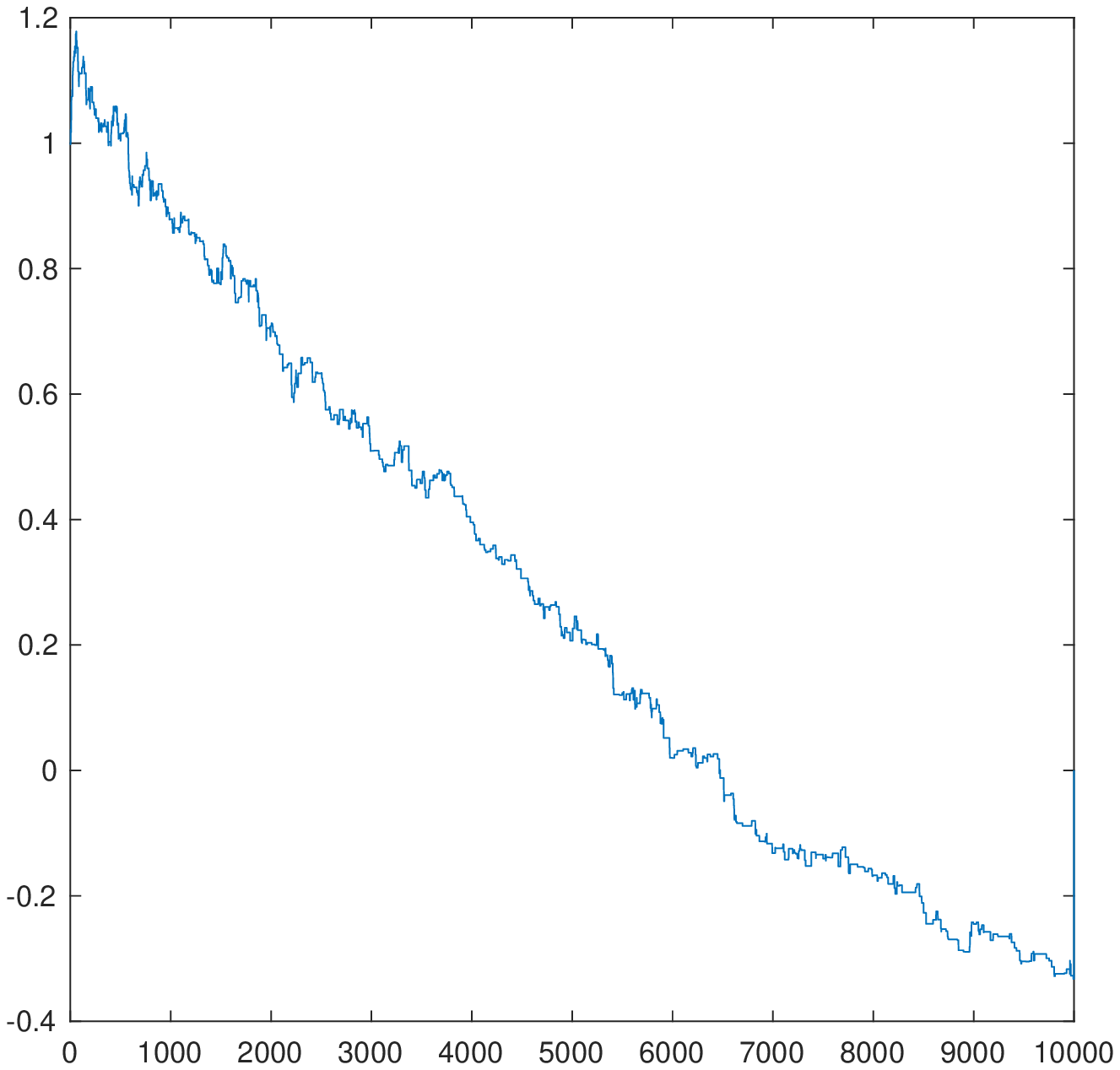}}
\end{tabular}
\end{center}
\caption{Reconstructions of the kite of different boundary parameterizations using $u^{\infty}(\hat{x}, d), (\hat{x}, d) \in \gamma_1^o \times \gamma_1^i$. 
Left column: boundary reconstructions. Right column: Markov chains for $a_0$. Top to bottom: $q_a$, $a_b$ and $q_c$.}
\label{qs}
\end{figure}

\subsection{Different Parameters}
Different $\beta$ in \eqref{kernelq} and $\eta$ in \eqref{9a} lead to different acceptance rates.
Table \eqref{betaeta} shows the acceptance rates for $K=10,000$. The results show that smaller $\eta$'s lead to lower acceptance rates
while smaller $\beta$'s lead to higher acceptance rates.
\begin{table}[h!]
\caption{Acceptance rates}
\label{betaeta}
\centering
\begin{tabular}{l|c|c|c|c}
\hline
 & $\beta=10^{-2}$&$\beta=10^{-3}$&  $\beta=10^{-4}$& $\beta=10^{-5}$\\ \hline
$\eta=0.1$  &0.0146 &0.1351&0.5093 & 0.8201\\ \hline
$\eta=0.05$  & 0.0085 &0.0476&0.2649 &0.6543\\ \hline
 $\eta=0.01$  & 0.0058 &0.0192&0.0597 & 0.2204\\
\hline	
\end{tabular}
\end{table}

\subsection{Different Data Apertures}
Now we show some numerical results for different limited aperture data.
We first consider the case $\gamma^i = \gamma^i_1$, i.e., there is only one incident wave.
We take three observation apertures $\gamma^o_1$, $\gamma^o_2$ and $\gamma^o_3$.
In Fig.~\ref{Fig1}, we show the reconstructions of the boundary for the kite when
\begin{eqnarray*}
&& \quad \gamma^o \times \gamma^i = \gamma_2^o \times \gamma^i_1,\\
&& \quad \gamma^o \times \gamma^i = \gamma_3^o \times \gamma^i_1,
\end{eqnarray*}
respectively. The $*$'s represent the locations reconstructed by the ESM 
\[
(-0.1, 0.1),\quad (0,0.3)
\]
for cases (1), (2), and (3), respectively.
The dotted line is the reconstructed boundary using the CM of the posterior probability distribution for $q$.
The solid line is the exact boundary. 
\begin{figure}[ht]
\begin{center}
\begin{tabular}{cc}
\resizebox{0.52\textwidth}{!}{\includegraphics{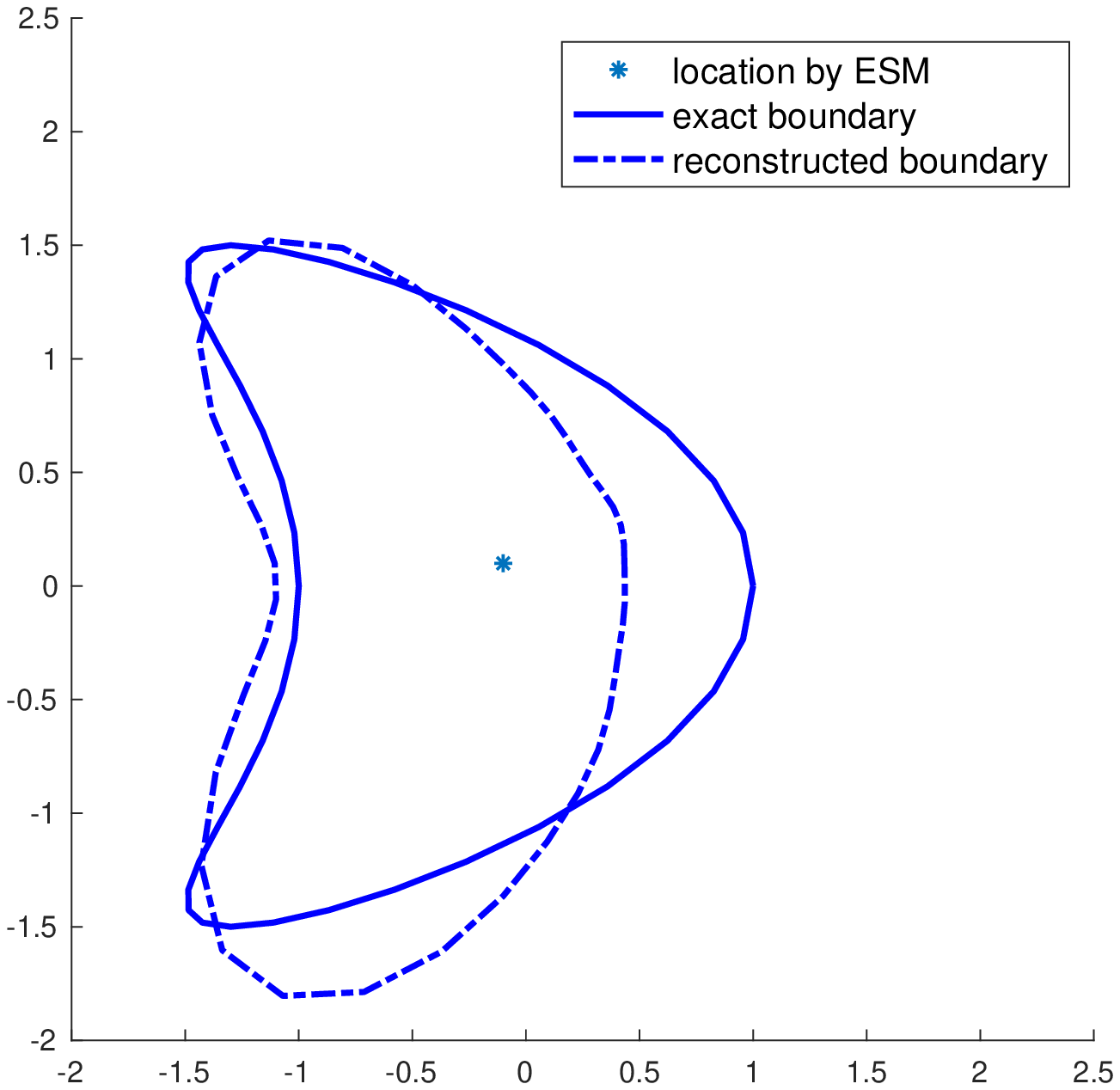}}&
\resizebox{0.52\textwidth}{!}{\includegraphics{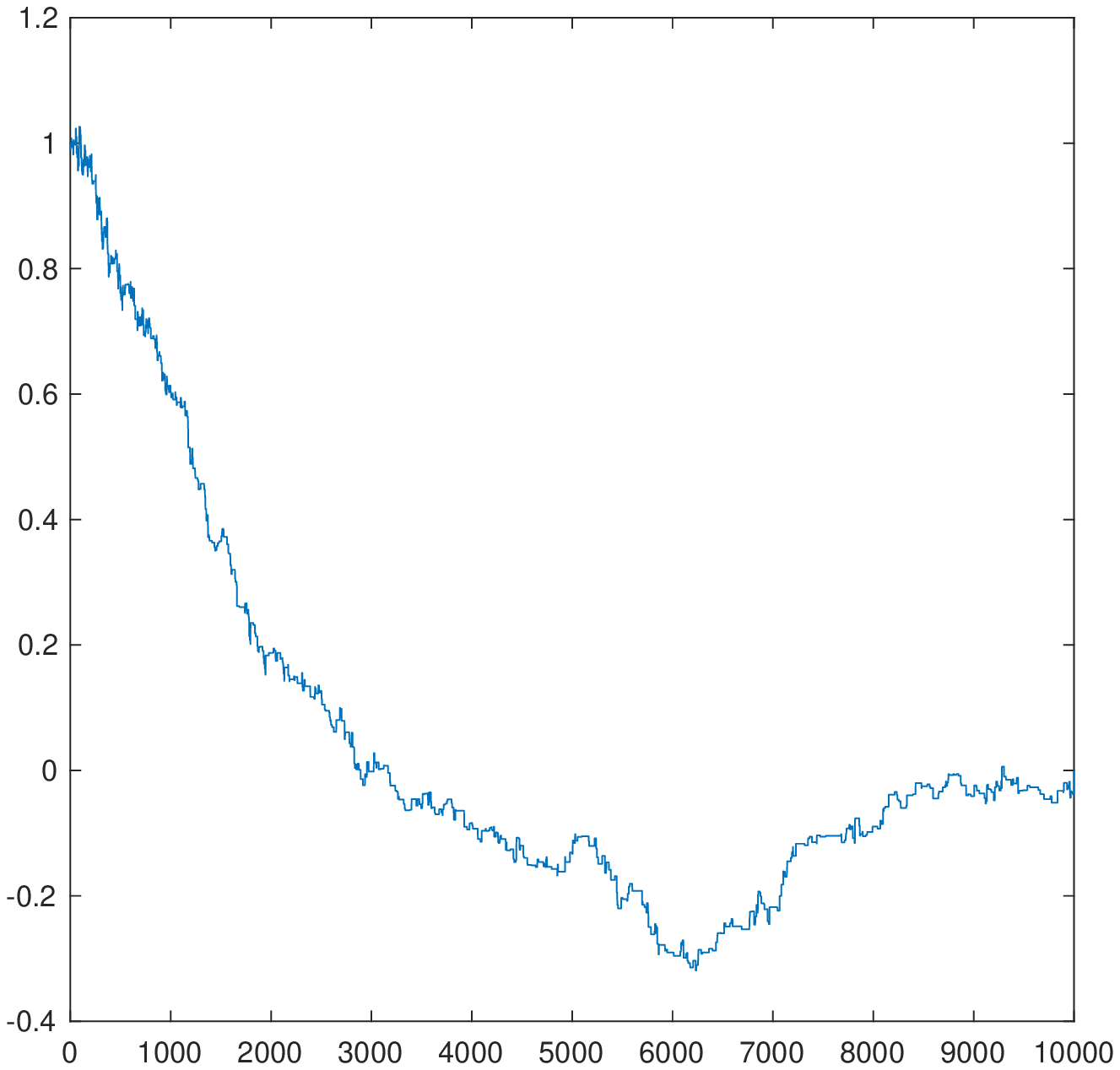}}\\
\resizebox{0.52\textwidth}{!}{\includegraphics{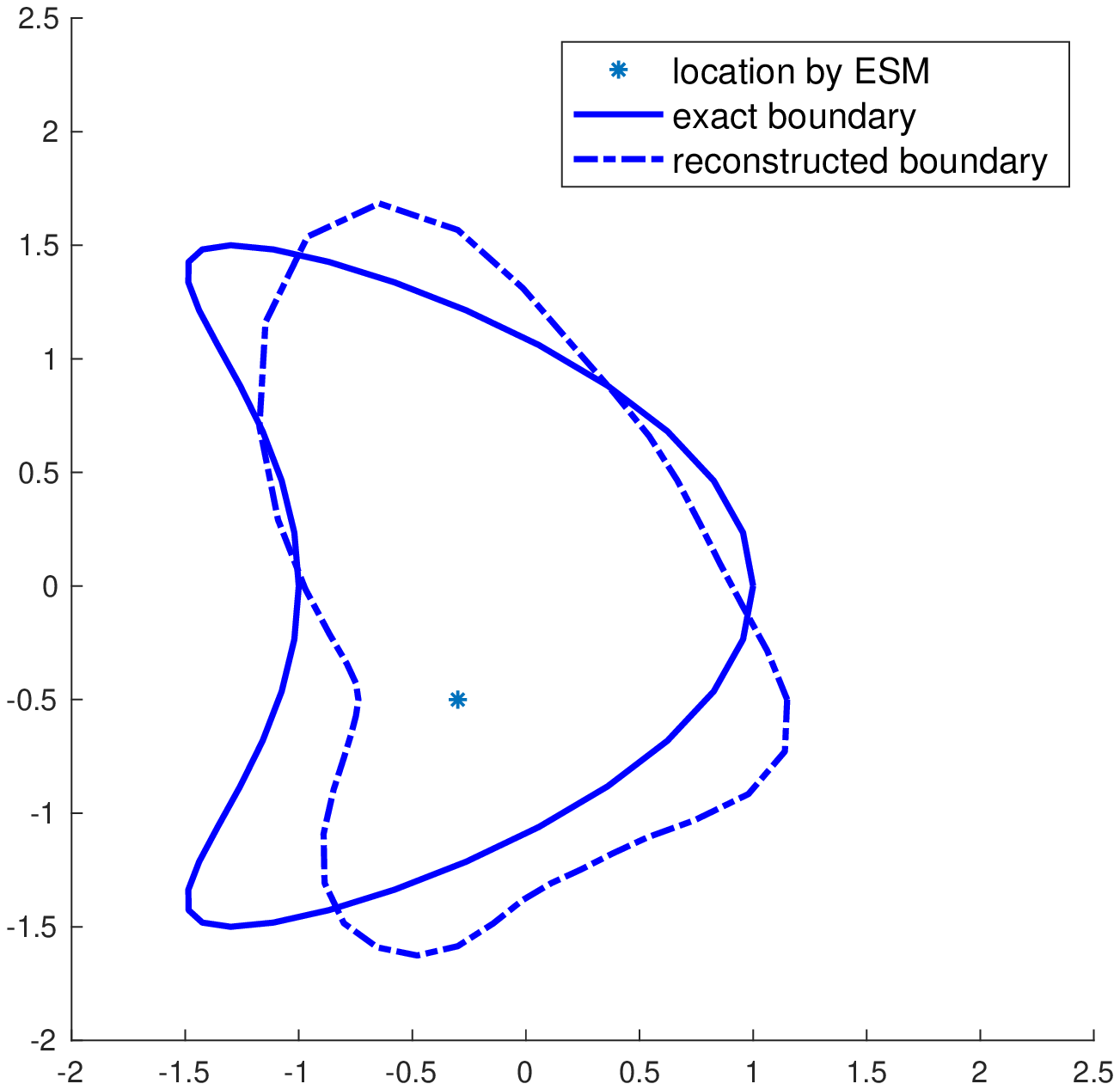}}&
\resizebox{0.52\textwidth}{!}{\includegraphics{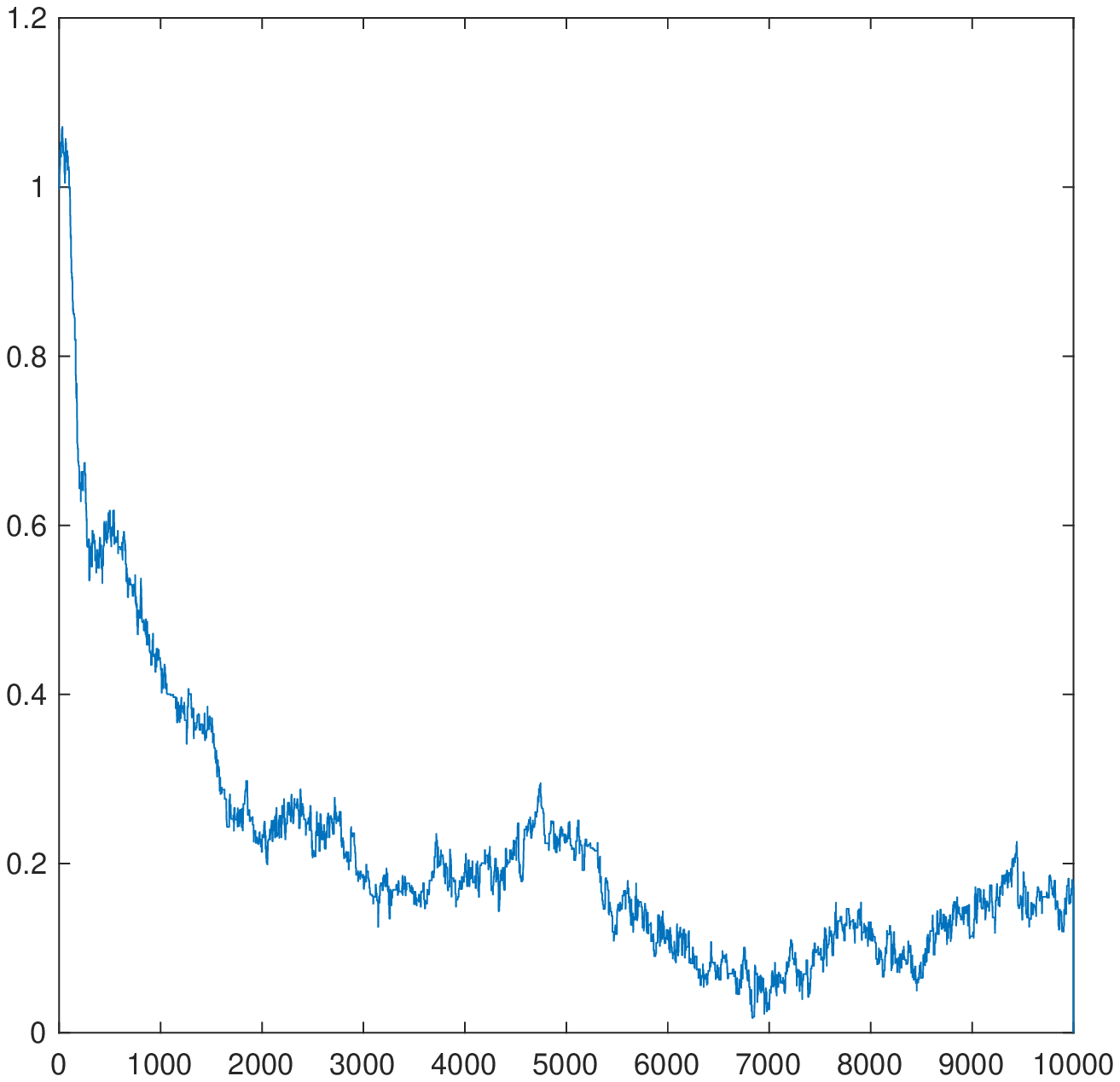}}
\end{tabular}
\end{center}
\caption{Reconstructions of the kite and  Markov chains for $a_0$.
Top: $\gamma^o \times \gamma^i = \gamma_2^o \times \gamma^i_1$. Bottom: $\gamma^o \times \gamma^i = \gamma_3^o \times \gamma^i_1$.}
\label{Fig1}
\end{figure}

Next we take $\gamma^i = \gamma^i_2$, i.e., multiple incident waves.
We take three observation apertures $\gamma^o_3$, $\gamma^o_4$ and $\gamma^o_5$.
In Fig.~\ref{Fig2}, we show the reconstructions of the kite for 
\begin{eqnarray*}
&&  \gamma^o \times \gamma^i = \gamma_3^o \times \gamma^i_2,\\
&&  \gamma^o \times \gamma^i = \gamma_4^o \times \gamma^i_2,\\
&&  \gamma^o \times \gamma^i = \gamma_5^o \times \gamma^i_2,
\end{eqnarray*}
respectively. The corresponding locations by ESM are
\[
(-0.3, -0.5), \quad (-0.4, 0.1),\quad (0.2,-0.3),
\]
respectively.
\begin{figure}[ht]
\begin{center}
\begin{tabular}{cc}
\resizebox{0.52\textwidth}{!}{\includegraphics{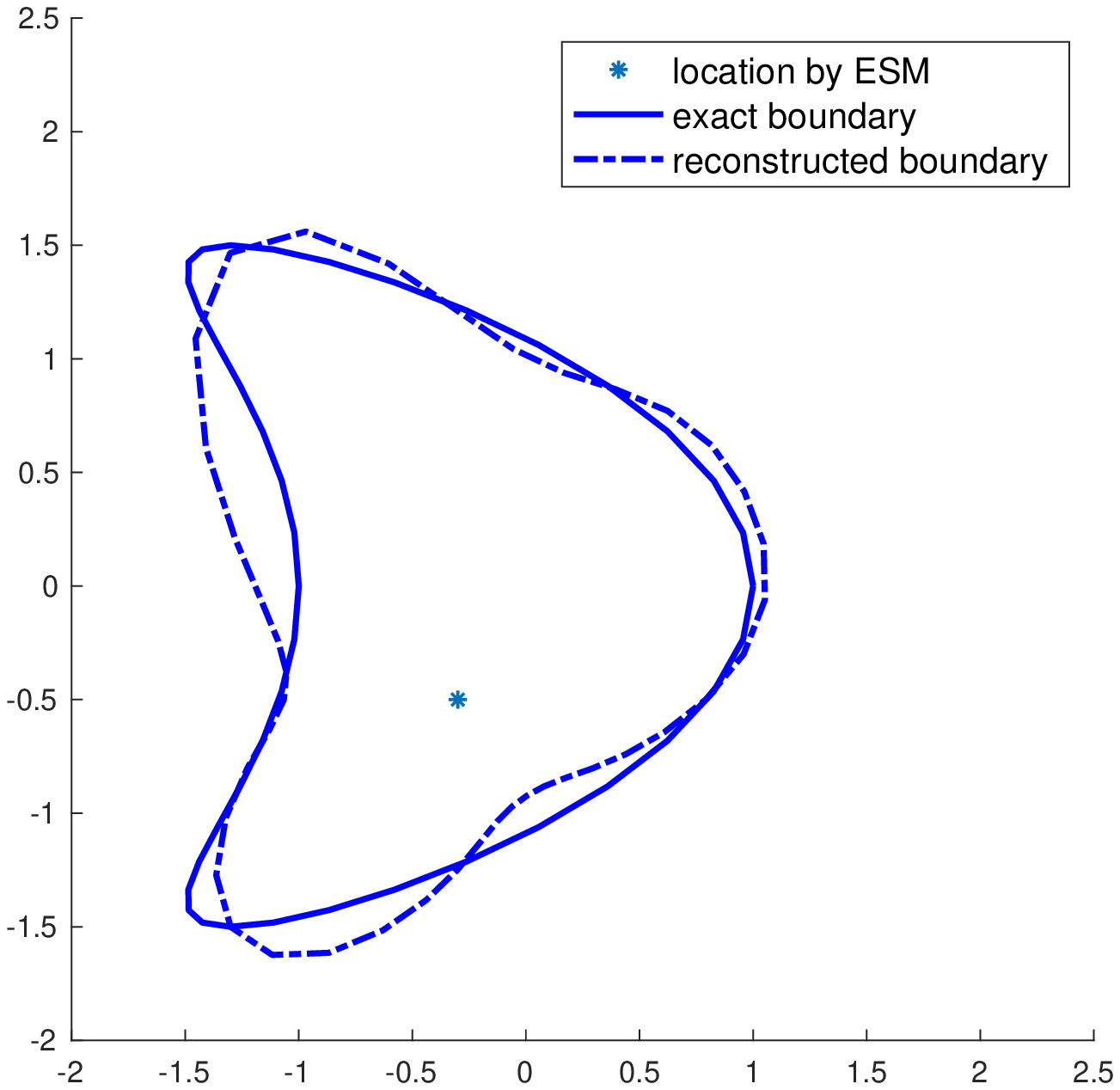}}&
\resizebox{0.52\textwidth}{!}{\includegraphics{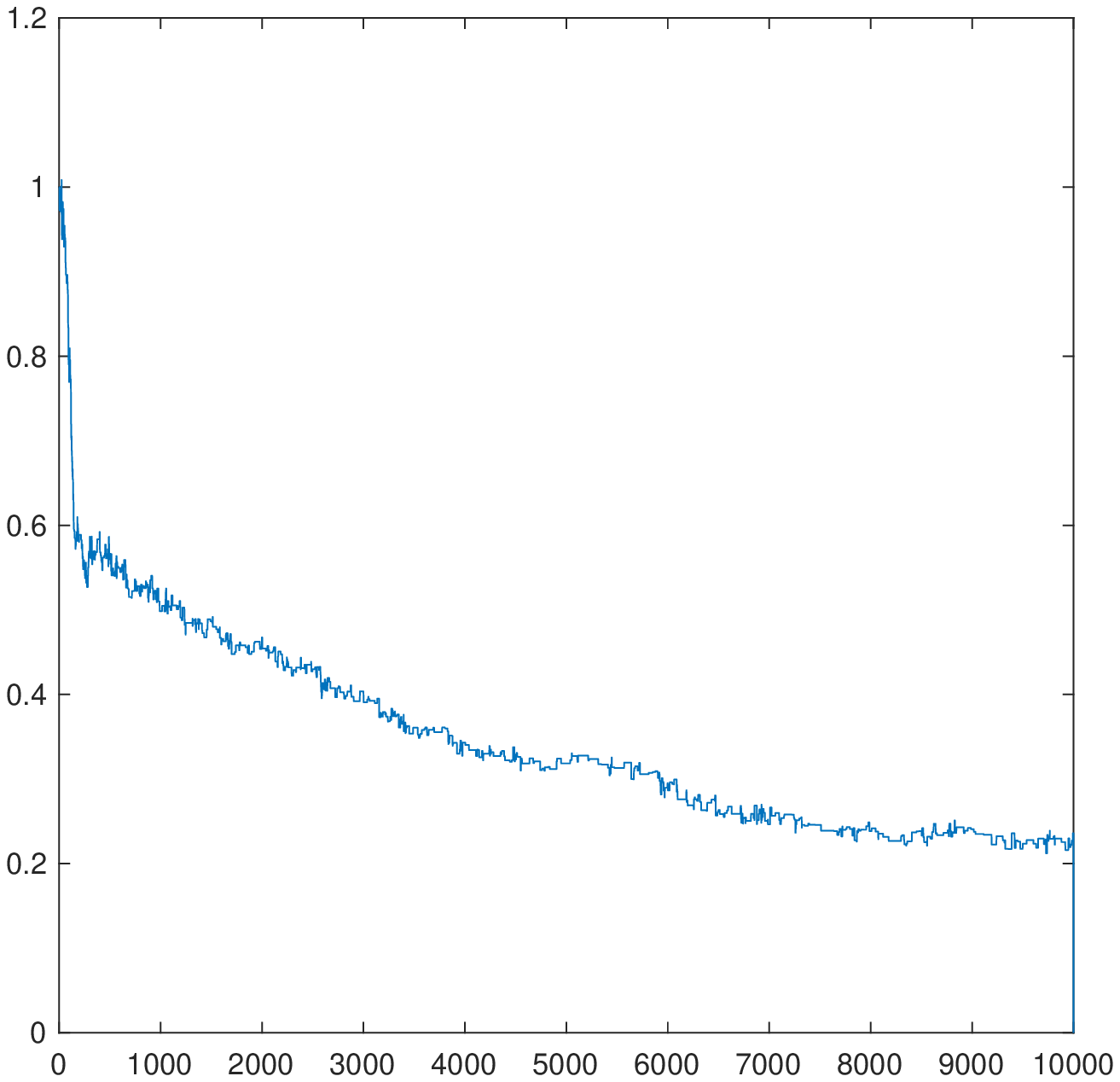}}\\
\resizebox{0.52\textwidth}{!}{\includegraphics{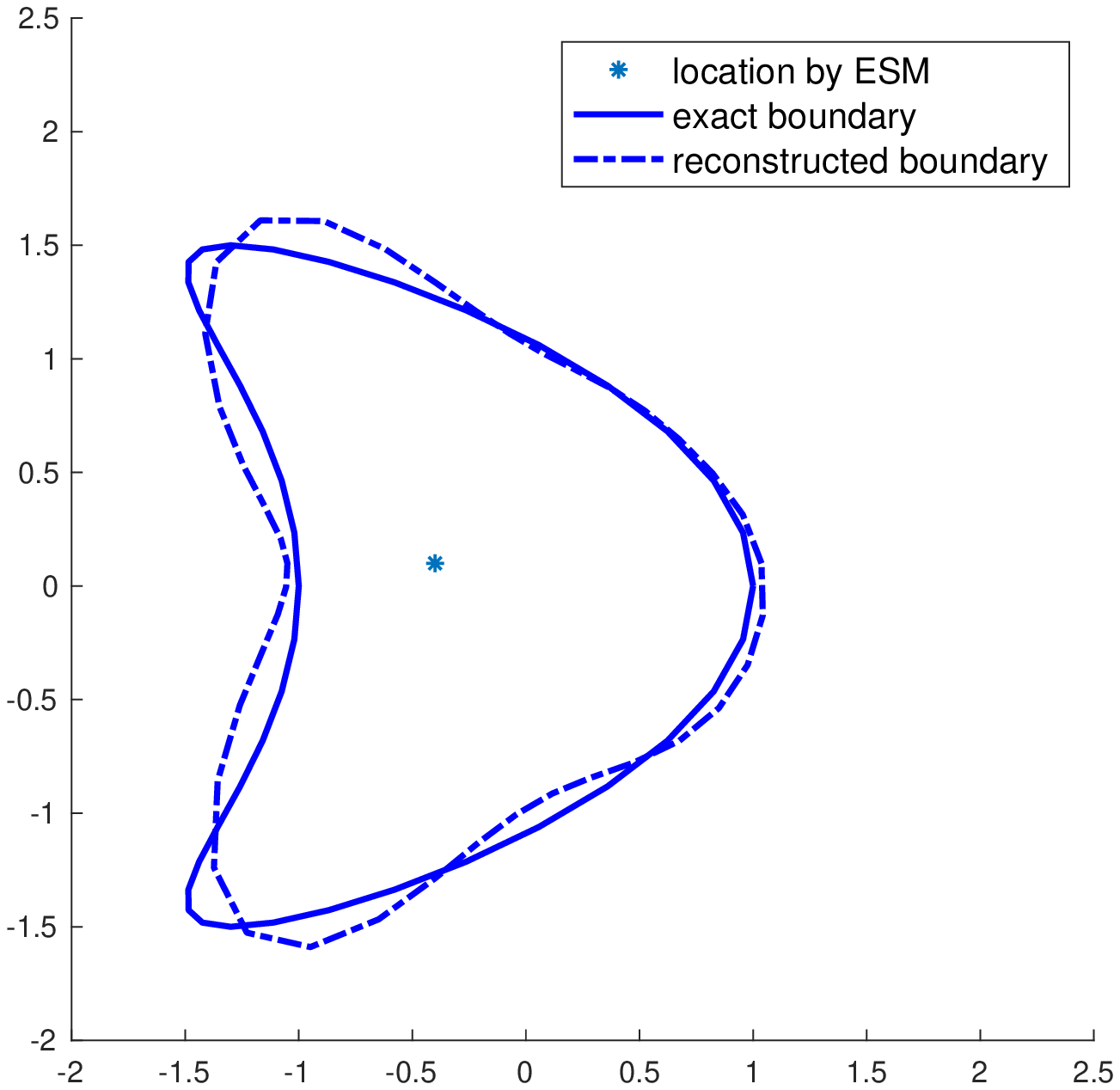}}&
\resizebox{0.52\textwidth}{!}{\includegraphics{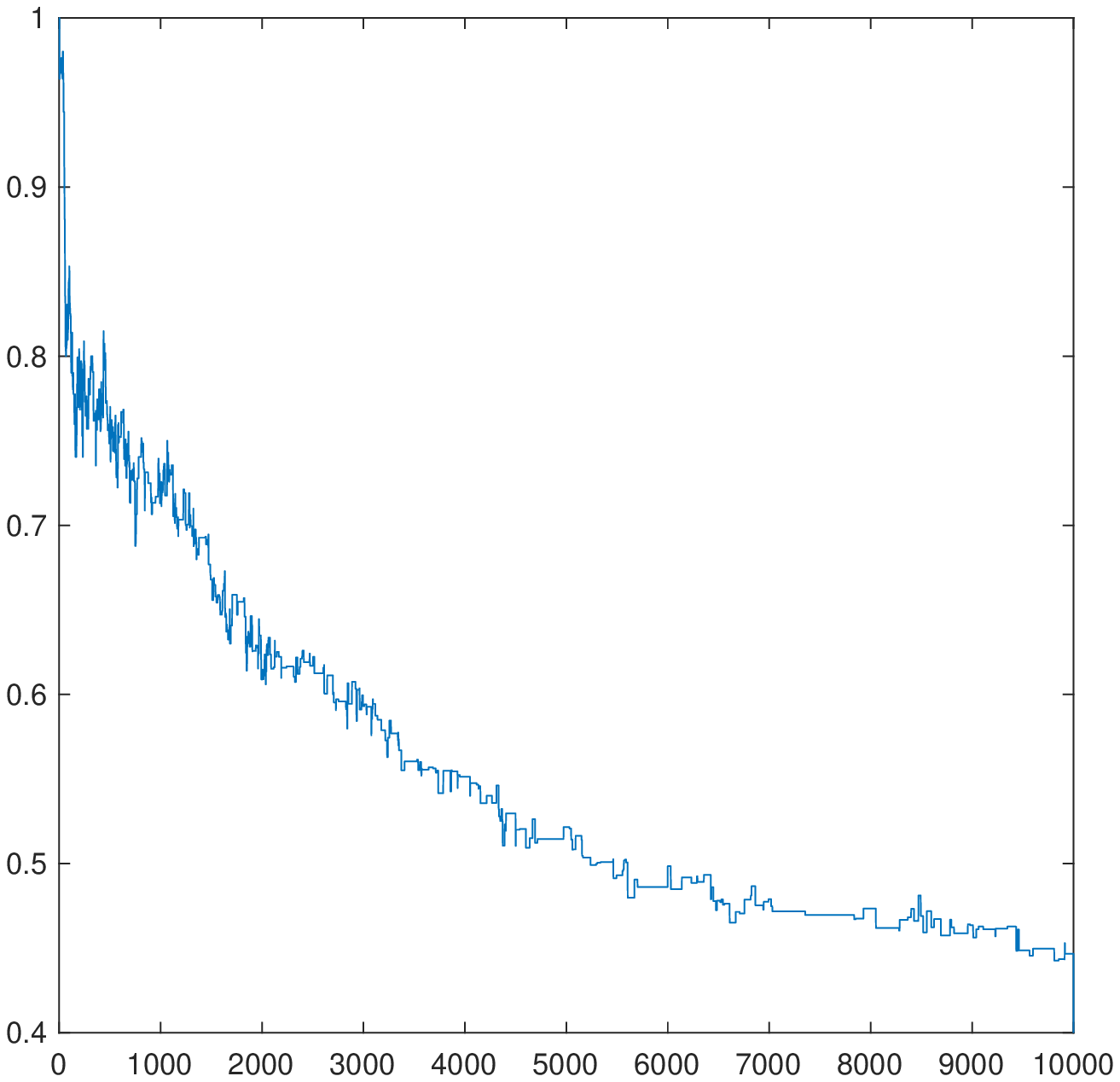}}\\
\resizebox{0.52\textwidth}{!}{\includegraphics{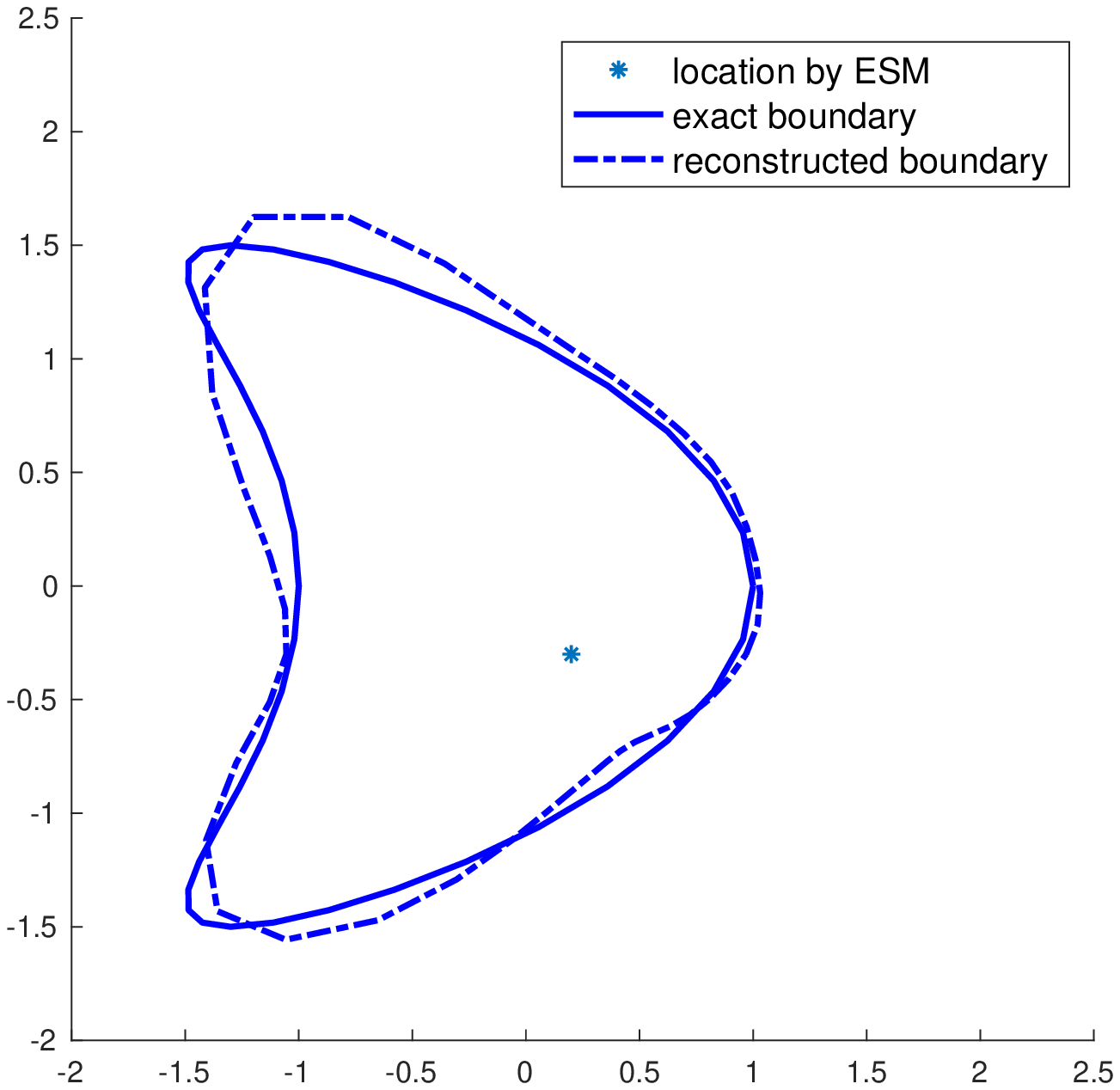}}&
\resizebox{0.52\textwidth}{!}{\includegraphics{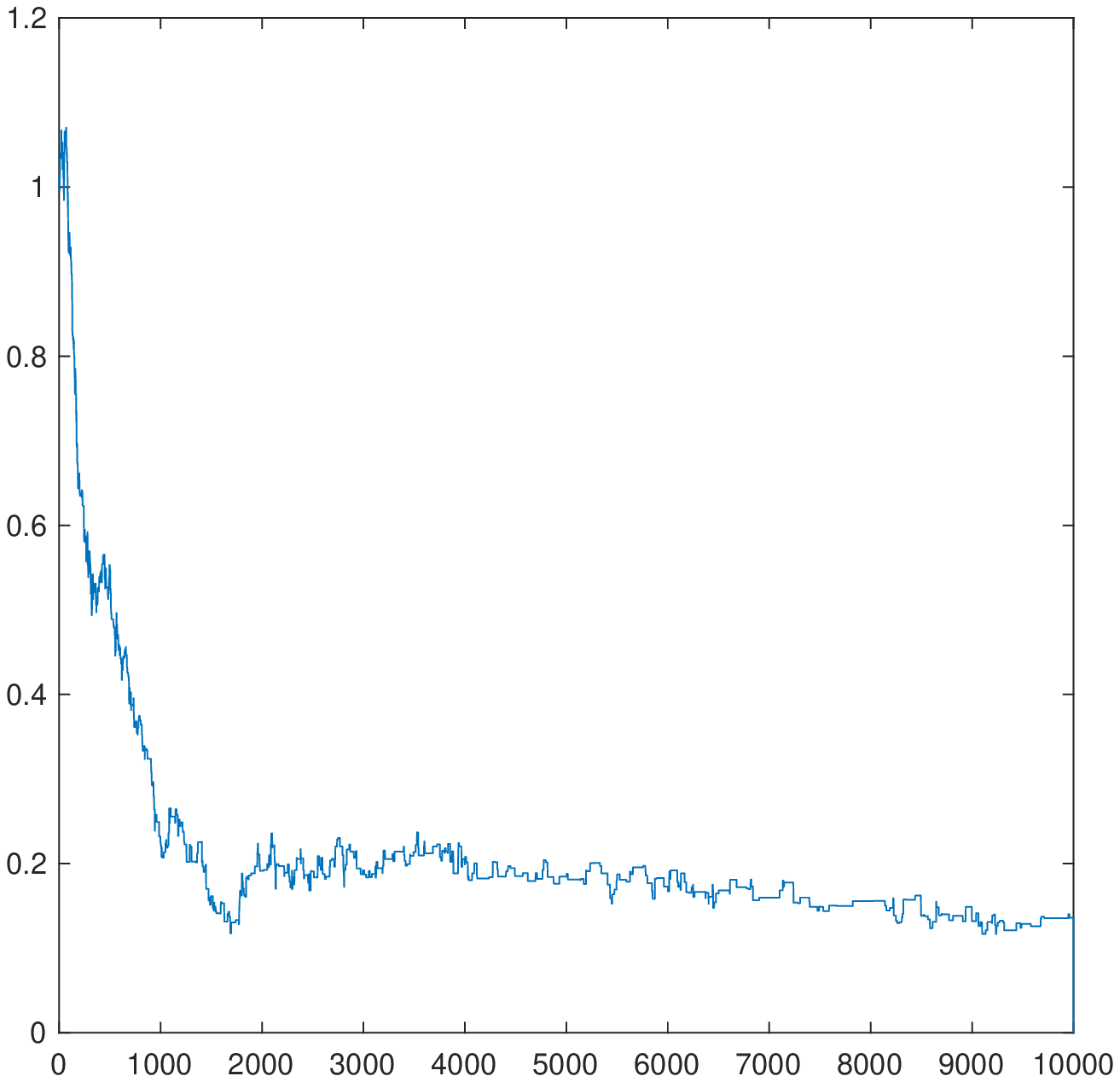}}
\end{tabular}
\end{center}
\caption{Reconstructions of the kite using multiple incident waves and Markov chains for $a_0$. Top left: $\gamma^o \times \gamma^i = \gamma_3^o \times \gamma^i_2$.
Top right: $\gamma^o \times \gamma^i = \gamma_4^o \times \gamma^i_2$. Bottom: $\gamma^o \times \gamma^i = \gamma_5^o \times \gamma^i_2$.}
\label{Fig2}
\end{figure}

Similar for the pear, we first consider the case when $\gamma^i = \gamma^i_1$ and 
three observation apertures $\gamma^o_1$, $\gamma^o_2$ and $\gamma^o_3$.
In Fig.~\ref{Fig3}, we show the reconstructions of the boundary for 
\begin{eqnarray*}
&&  \gamma^o \times \gamma^i = \gamma_1^o \times \gamma^i_1,\\
&&  \gamma^o \times \gamma^i = \gamma_2^o \times \gamma^i_1,\\
&&  \gamma^o \times \gamma^i = \gamma_3^o \times \gamma^i_1.
\end{eqnarray*}
The locations by ESM are
\[
(-0.1,-0.1),\quad (-0.2,0.1),\quad (0.2,-0.3),
\]
respectively.

\begin{figure}[ht]
\begin{center}
\begin{tabular}{cc}
\resizebox{0.52\textwidth}{!}{\includegraphics{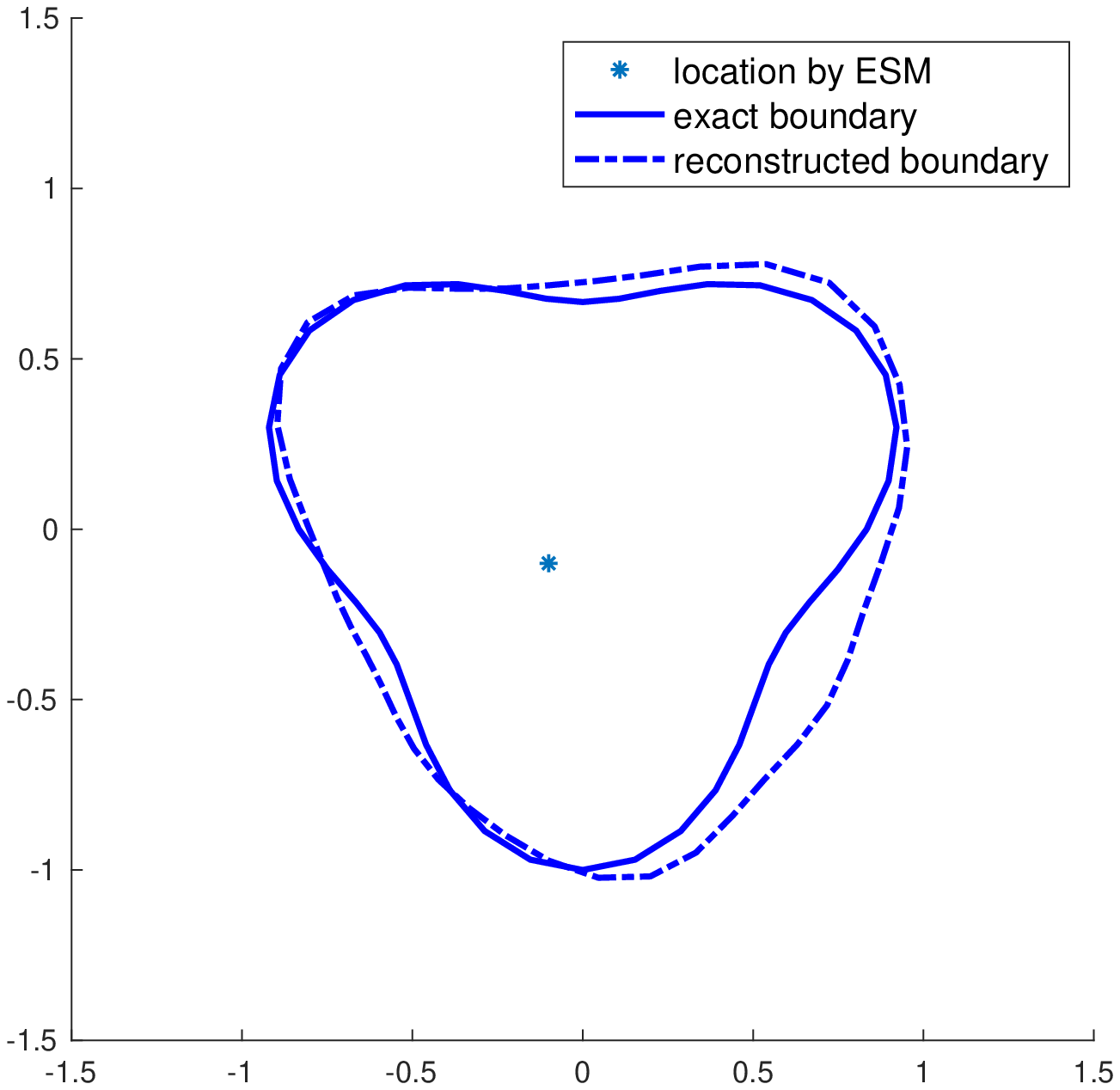}}&
\resizebox{0.52\textwidth}{!}{\includegraphics{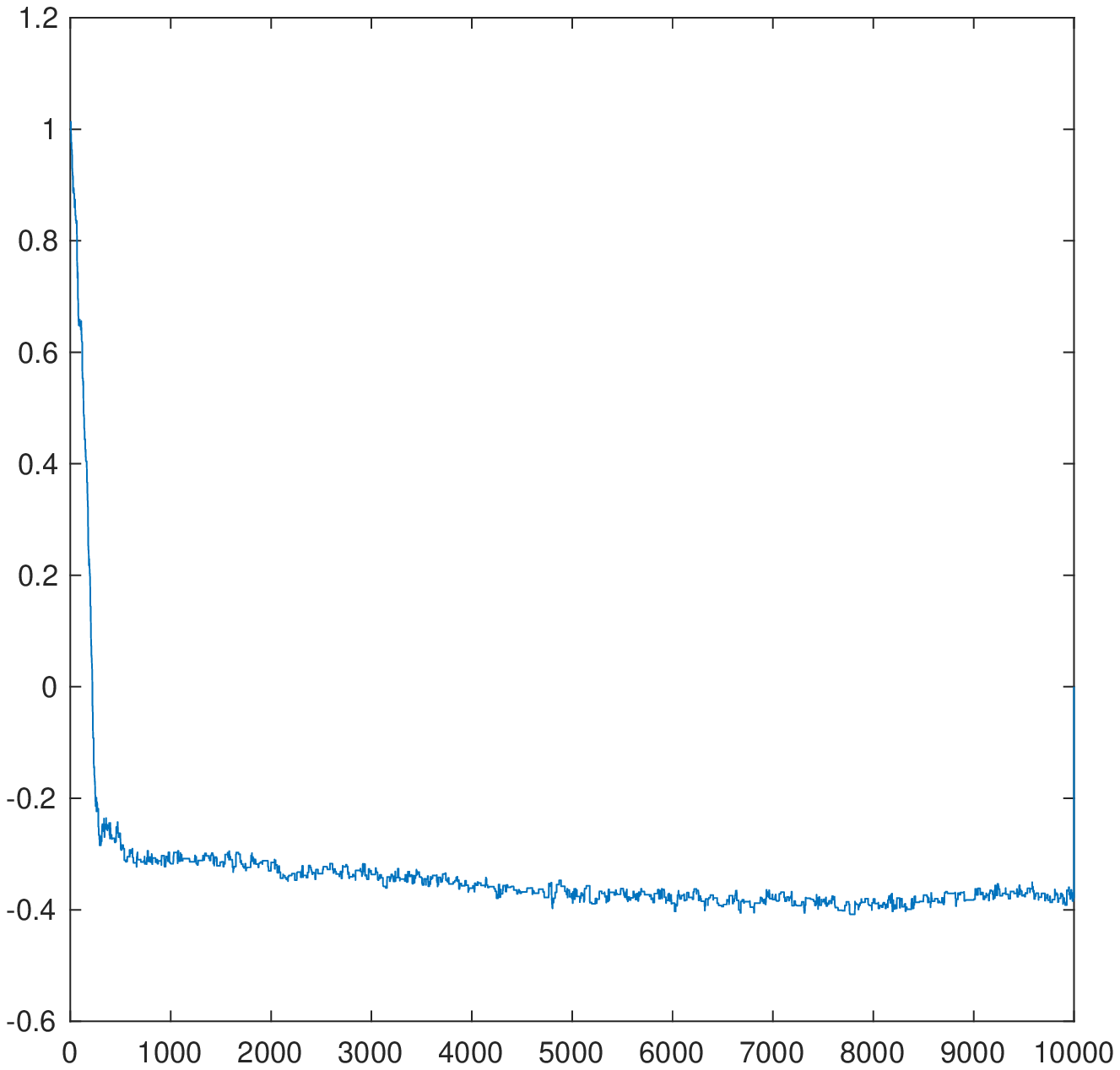}}\\
\resizebox{0.52\textwidth}{!}{\includegraphics{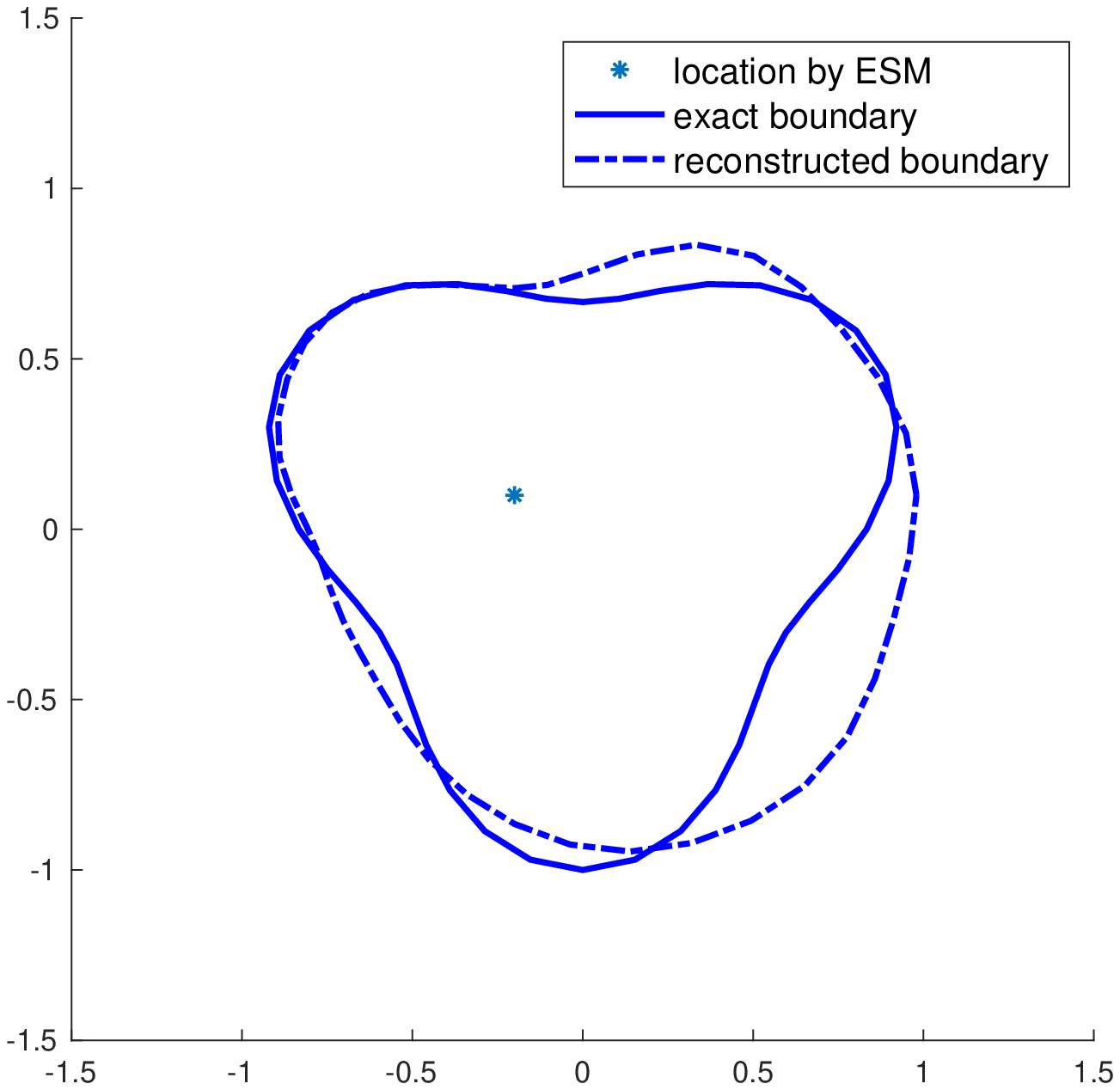}}&
\resizebox{0.52\textwidth}{!}{\includegraphics{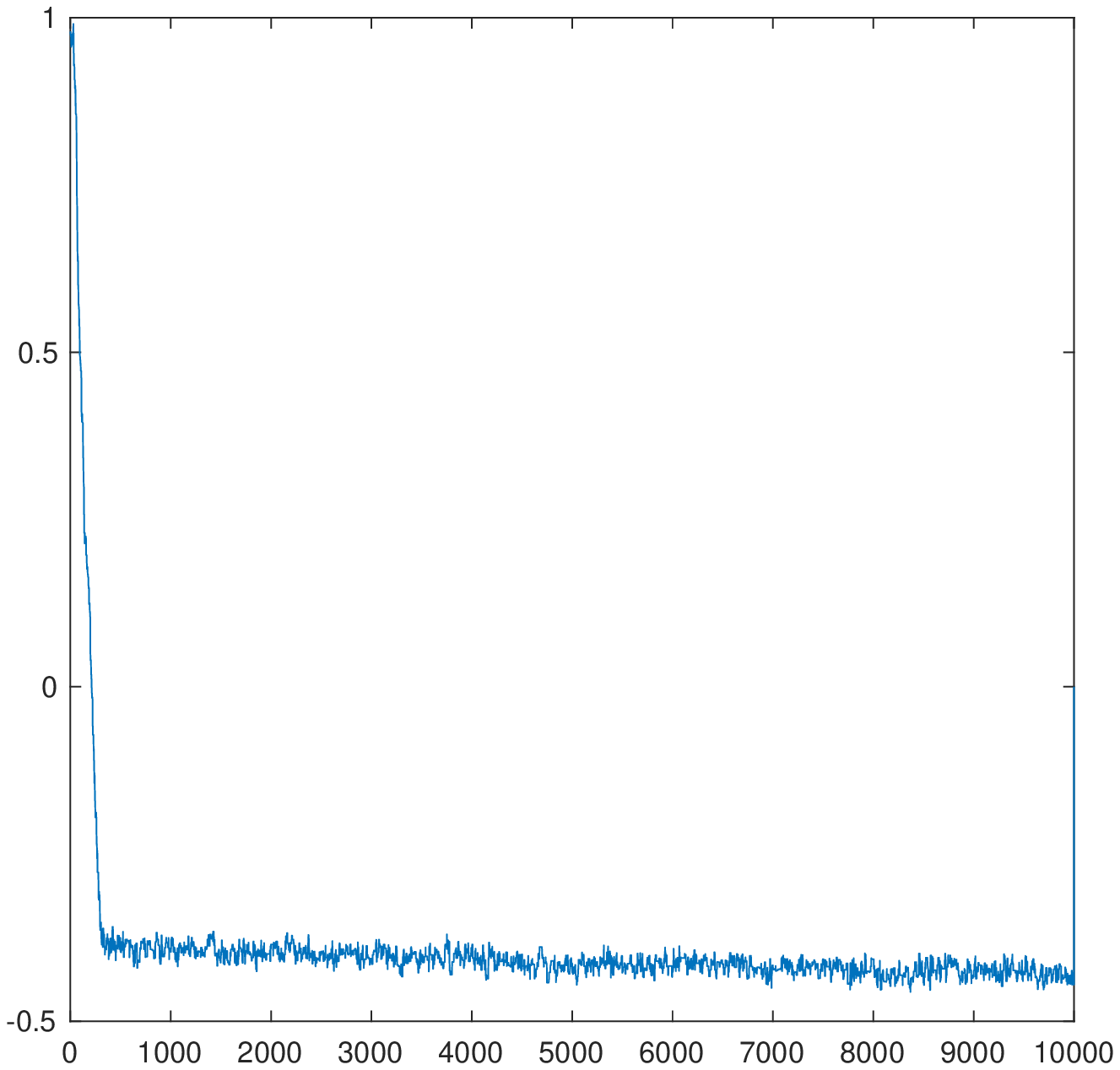}}\\
\resizebox{0.52\textwidth}{!}{\includegraphics{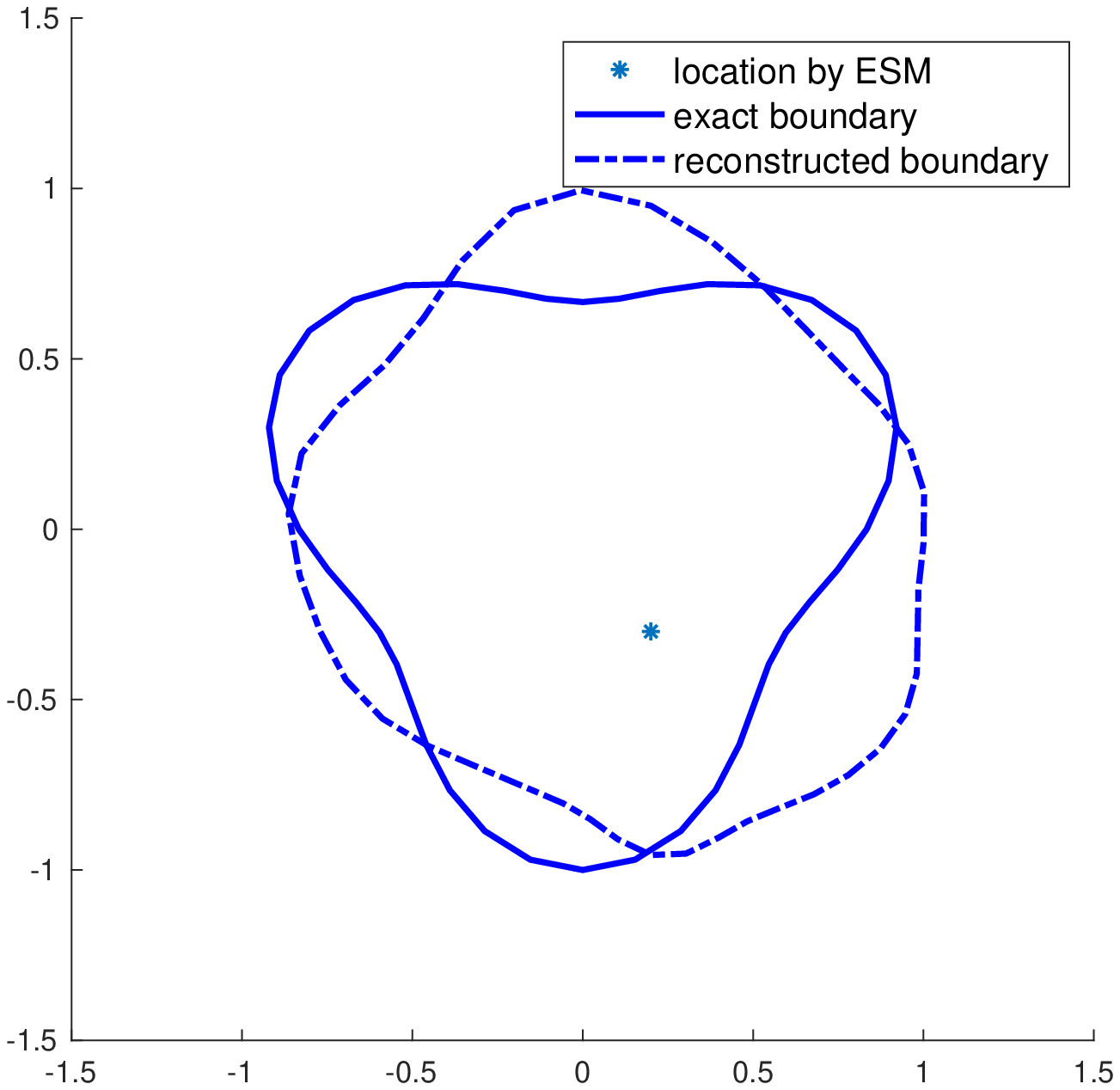}}&
\resizebox{0.52\textwidth}{!}{\includegraphics{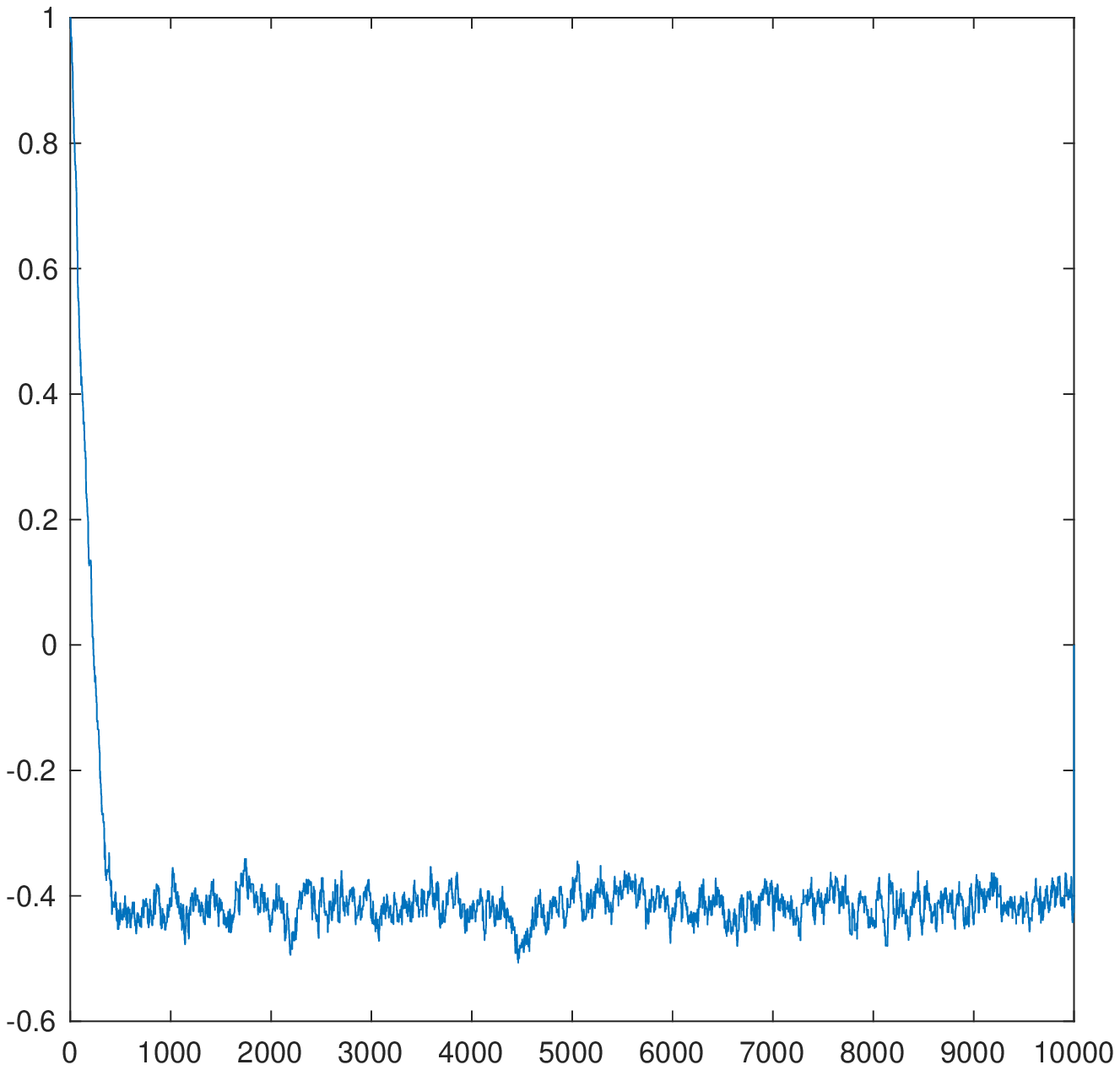}}
\end{tabular}
\end{center}
\caption{Reconstructions of the pear using one incident wave. Top left: $\gamma^o \times \gamma^i = \gamma_1^o \times \gamma^i_1$.
Top right: $\gamma^o \times \gamma^i = \gamma_2^o \times \gamma^i_1$. Bottom: $\gamma^o \times \gamma^i = \gamma_3^o \times \gamma^i_1$.}
\label{Fig3}
\end{figure}

Next we take $\gamma^i = \gamma^i_2$, i.e., multiple incident waves.
We take three observation apertures $\gamma^o_3$, $\gamma^o_4$ and $\gamma^o_5$.
In Fig.~\ref{Fig4}, we show the reconstructions of the boundary for 
\begin{eqnarray*}
&&  \gamma^o \times \gamma^i = \gamma_3^o \times \gamma^i_2,\\
&&  \gamma^o \times \gamma^i = \gamma_4^o \times \gamma^i_2,\\
&&  \gamma^o \times \gamma^i = \gamma_5^o \times \gamma^i_2,
\end{eqnarray*}
respectively. The locations by ESM are
\[
(-0.1,-0.3), \quad (-0.2,-0.2), \quad (-0.2,-0.1).
\]

\begin{figure}[ht]
\begin{center}
\begin{tabular}{cc}
\resizebox{0.52\textwidth}{!}{\includegraphics{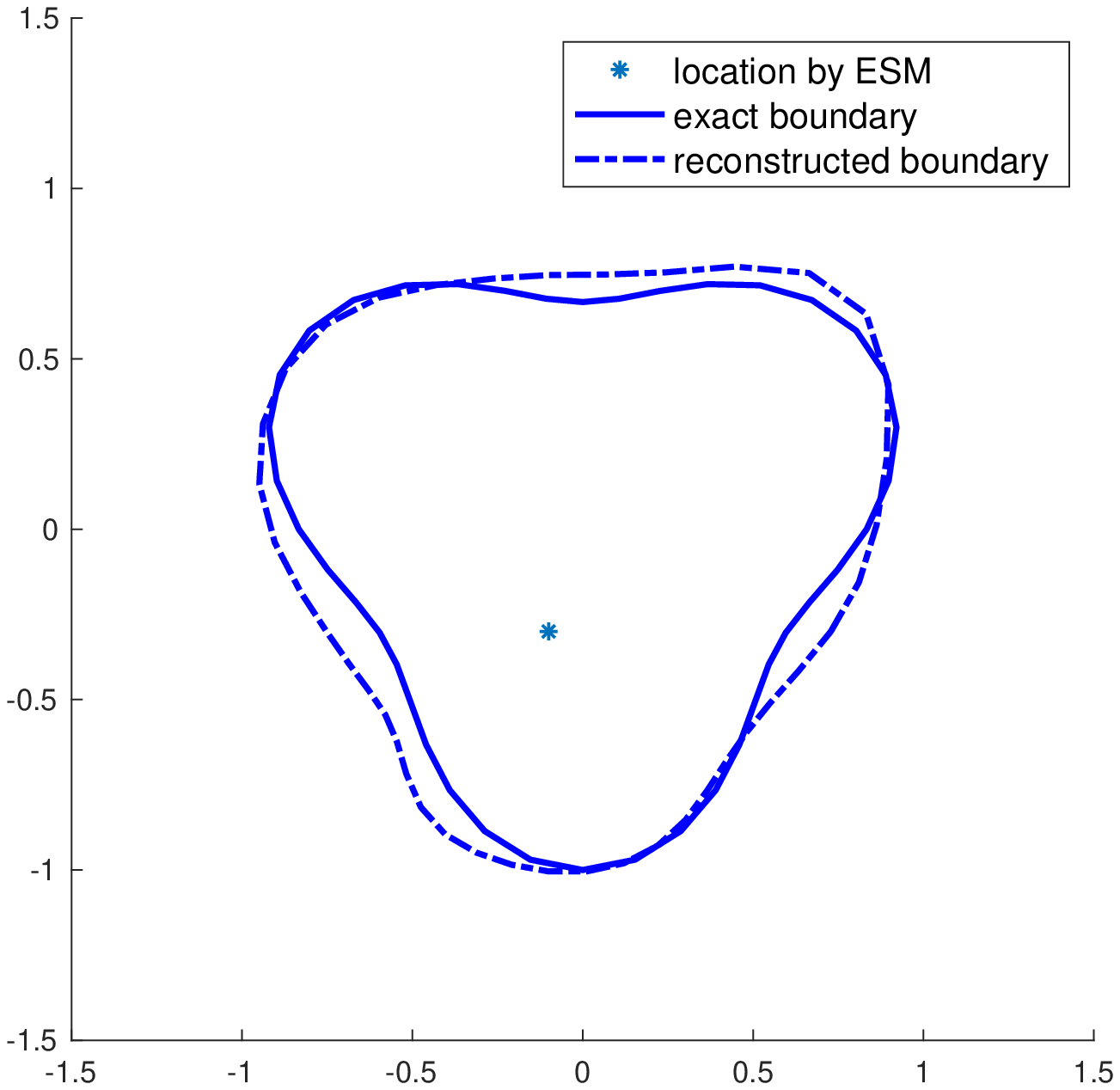}}&
\resizebox{0.52\textwidth}{!}{\includegraphics{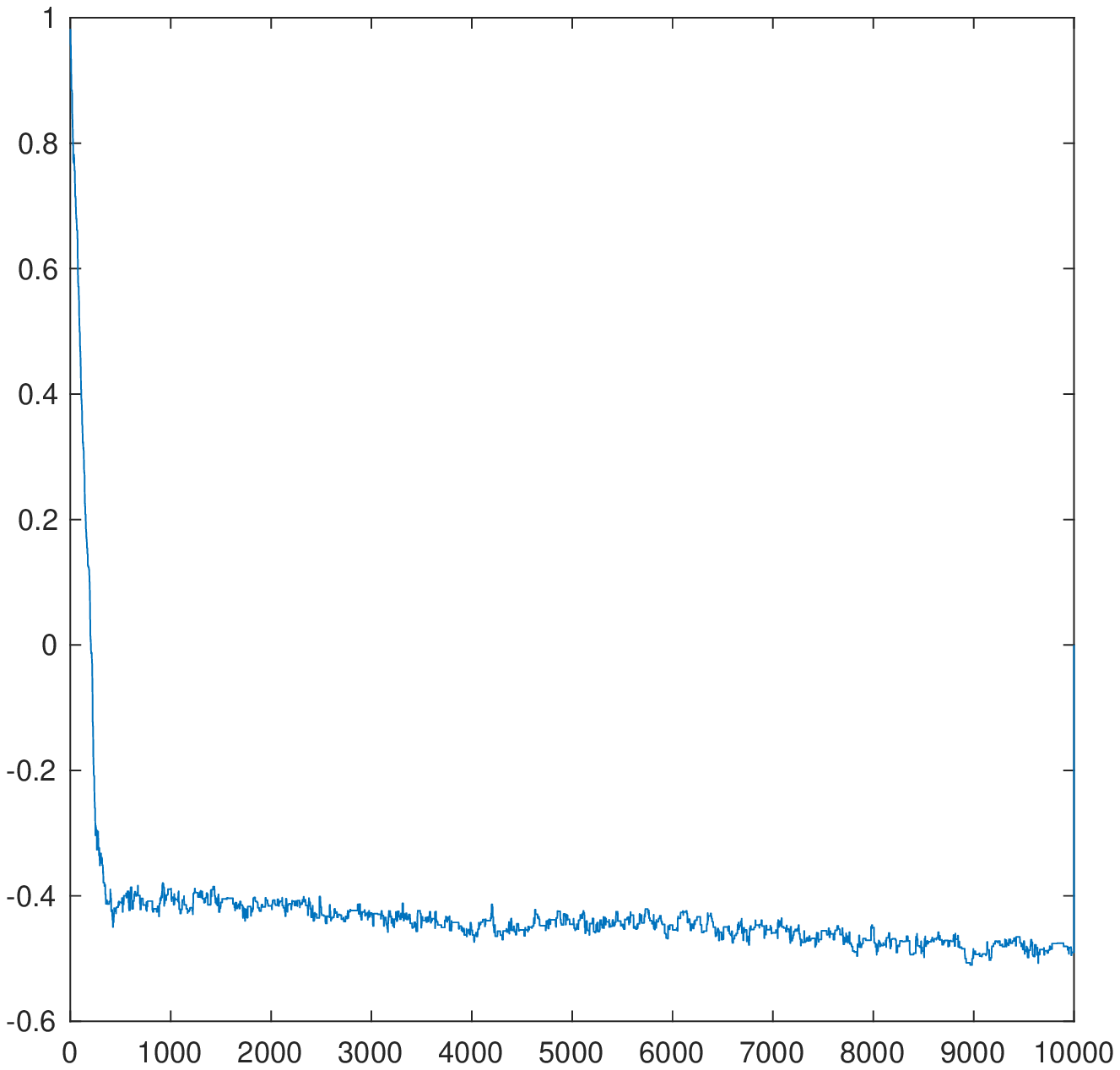}}\\
\resizebox{0.52\textwidth}{!}{\includegraphics{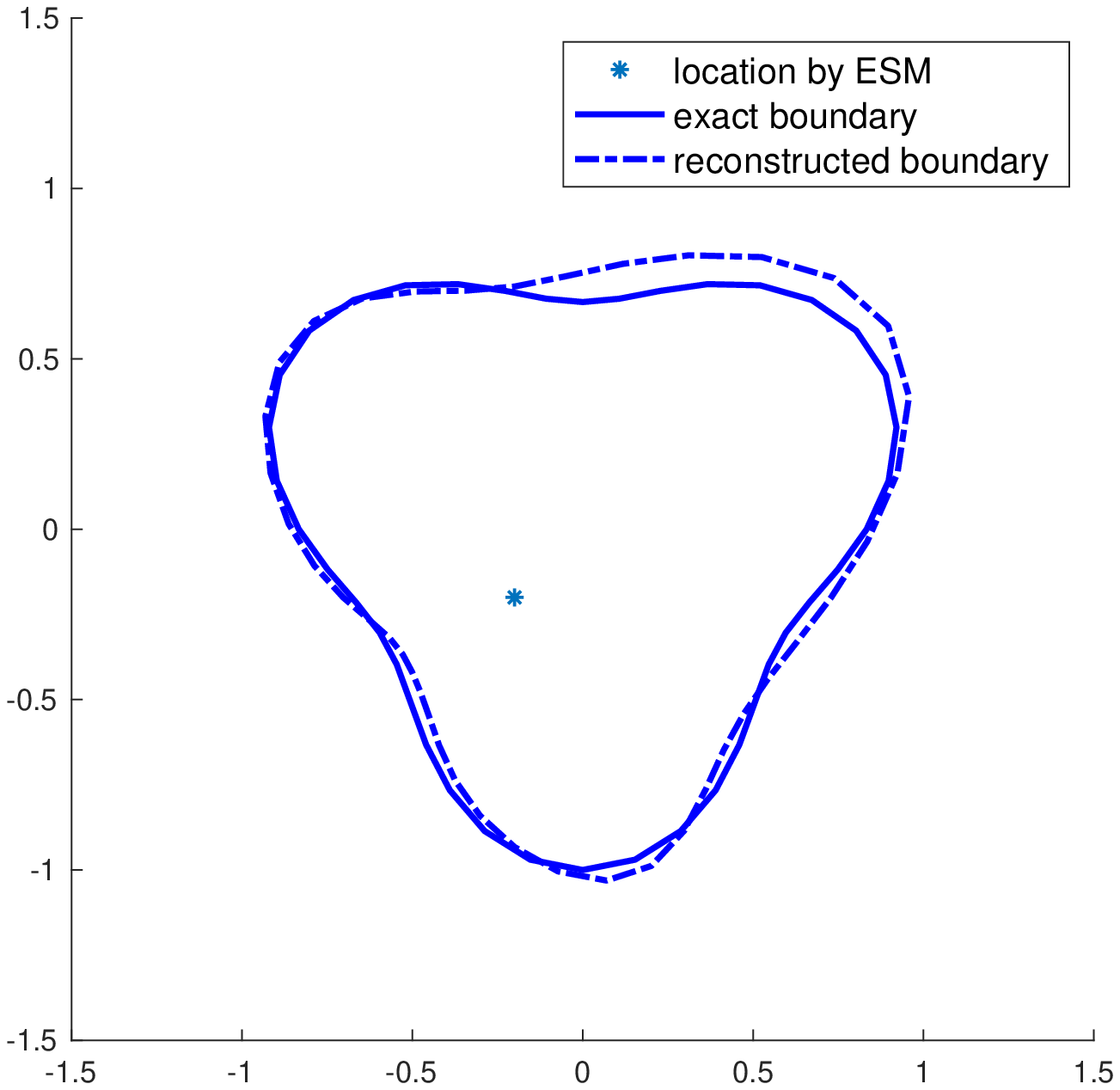}}&
\resizebox{0.52\textwidth}{!}{\includegraphics{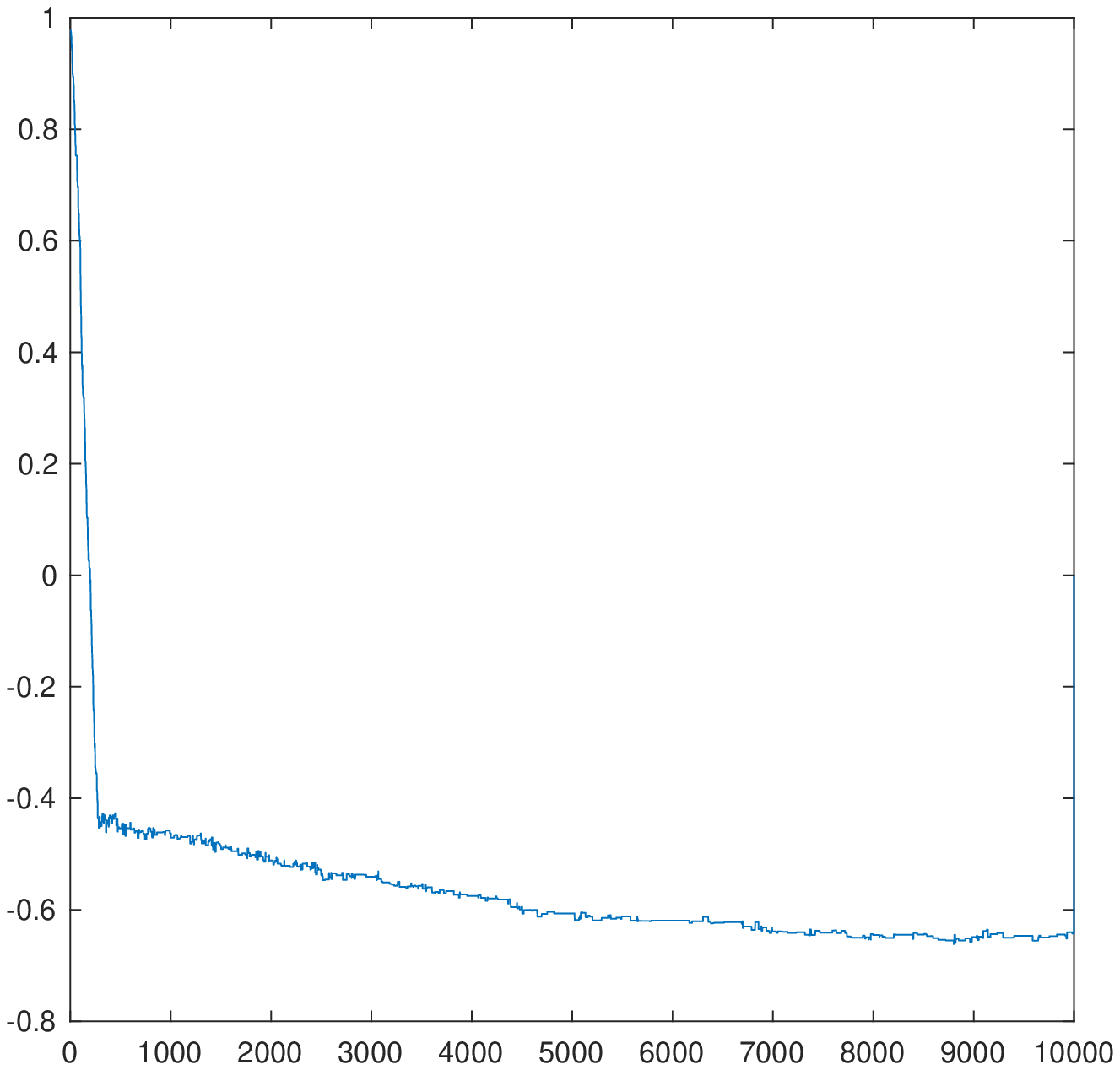}}\\
\resizebox{0.52\textwidth}{!}{\includegraphics{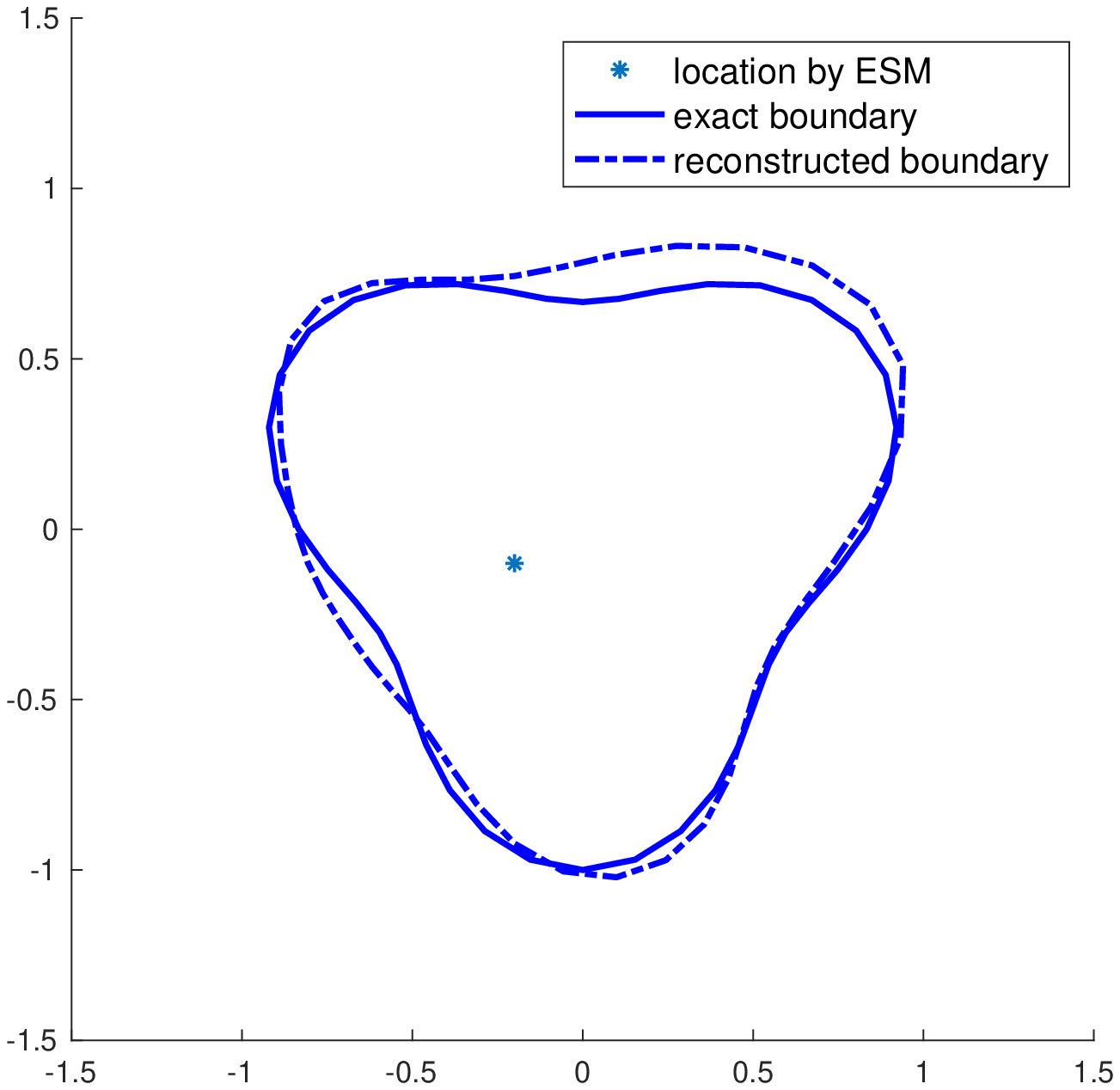}}&
\resizebox{0.52\textwidth}{!}{\includegraphics{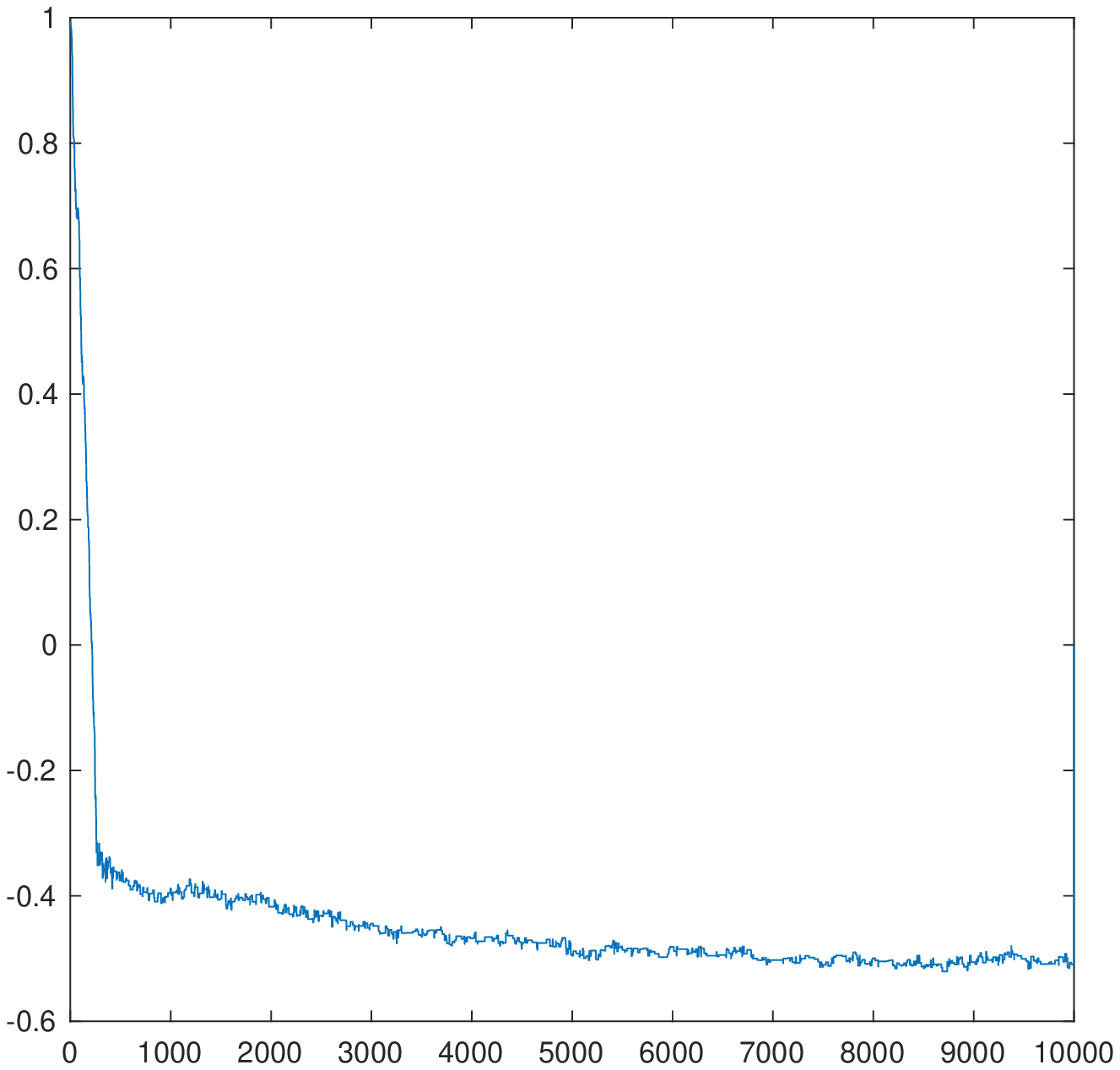}}
\end{tabular}
\end{center}
\caption{Reconstructions of the pear using multiple incident waves. Top left: $\gamma^o \times \gamma^i = \gamma_3^o \times \gamma^i_2$.
Top right: $\gamma^o \times \gamma^i = \gamma_4^o \times \gamma^i_2$. Bottom: $\gamma^o \times \gamma^i = \gamma_5^o \times \gamma^i_2$.}
\label{Fig4}
\end{figure}

\section{Conclusions}
We present some study of a Bayesian method for the limited aperture inverse scattering problems.
The extended sampling method is generalized to obtain the location of the obstacle, which is
critical to the fast convergence of the MCMC method.
Numerical examples show that the method can yield satisfactory reconstructions even when the measurement data is quite limited. 
The readers are encouraged to compare the results with other direct methods in inverse scattering using one incident wave, 
e.g., the extended sampling method \cite{LiuSun2018IP}. 

The Bayesian approach avoids to directly deal with the nonlinearity and the ill-posedness of the inverse problem,
but involves large computational cost. Several aspects will be investigated in the future to reduce the cost:
1. The faster algorithms for the direct scattering problem; 2. The better priors for the parametrization of the obstacle boundary; and
3. The more efficient transition kernel. Another interesting but challenging problem is the case of multiple obstacles.

\end{document}